\newcommand{\RR}{\ensuremath{\mathbb R}}
\newcommand{\NN}{\ensuremath{\mathbb N}}
\newcommand{\nnn}{\ensuremath{{n \in \NN}}}
\newcommand{\spann}{\ensuremath{{\operatorname{span}}}}
\newcommand{\dom}{\ensuremath{\operatorname{dom}}}
\newcommand{\halo}{\ensuremath{\operatorname{halo}}}
\newcommand{\gra}{\ensuremath{\operatorname{gra}}}
\newcommand{\ran}{\ensuremath{\operatorname{ran}}}
\newcommand{\Id}{\ensuremath{\operatorname{Id}}}
\newcommand{\bx}{\ensuremath{\mathbf{x}}}
\newtheorem{theorem}{Theorem}[section]
\newtheorem{fact}[theorem]{Fact}
\newtheorem{corollary}[theorem]{Corollary}
\newtheorem{proposition}[theorem]{Proposition}
\newtheorem{definition}[theorem]{Definition}
\theoremstyle{plain}{\theorembodyfont{\rmfamily}
}
\theoremstyle{plain}{\theorembodyfont{\rmfamily}
}
\theoremstyle{plain}{\theorembodyfont{\rmfamily}
}
\theoremstyle{plain}{\theorembodyfont{\rmfamily}
\newtheorem{example}[theorem]{Example}}
\theoremstyle{plain}{\theorembodyfont{\rmfamily}
\newtheorem{remark}[theorem]{Remark}}
\theoremstyle{plain}{\theorembodyfont{\rmfamily}
}
\numberwithin{equation}{section}
\title{Five classes of monotone linear relations and operators}
\author{Mclean Edwards}
\begin{document}
\maketitle

\abstract{The relationships between five classes of monotonicity,
namely $3^*$-, $3$-cyclic, strictly, para-, and maximal monotonicity, are
explored for linear operators and linear relations in Hilbert space.  Where
classes overlap, examples are given; otherwise their
relationships are noted for linear operators in $\RR^2$, $\RR^n$, and
general Hilbert spaces.  Along the way, some results for linear relations are
obtained.}

\section{Introduction}
Monotone operators are multivalued
operators $T : X \to 2^X$ such that for all $x^* \in Tx$ and all $y^* \in Ty$,
\begin{equation}
  \label{e:monotone}
  \langle x - y , x^* - y^* \rangle \geq 0.
\end{equation}

They arise as a generalization of 
subdifferentials of convex functions, and are used extensively in variational inequality (and by reformulation, equilibrium) theory.

Variational inequalities were first outlined in 1966 \cite{StampacchiaHartmann},
and have since been used to model a large number of problems.

\begin{definition}[Variational Inequality Problem]
  Given a nonempty closed convex set $C$ and a monotone operator $T$ acting on $C$,
  the \emph{variational inequality problem}, $VIP(T,C)$, is to
  find an $\bar{x} \in C$ such that for some
  $\bar{x}^* \in T(\bar{x})$
\begin{equation}
    \langle c - \bar{x}, \bar{x}^* \rangle \geq 0
    ~\textrm{for all}~ c \in C.
    \label{e:VIP}
\end{equation}
\end{definition}

They provide a unified framework for, among others, constrained optimization, saddle point,
Nash equilibrium, traffic equilibrium, frictional contact, and complementarity problems.
For a good overview of sample problems and current methods used to solve them, see \cite{FacchineiPang} and \cite{FerrisPang1997}.  

Monotone operators are also important for the theory of partial differential equations,
where monotonicity both characterizes the vector fields of selfdual
Lagrangians \cite{GhoussoubBook} and is crucial for the determination of
equilibrium solutions (using a variational inequality) for elliptical
and evolution differential equations and inclusions (see for instance \cite{AttouchEvolution}).

Over the years, various classes of monotone operators have been introduced in the exploration of their theory, however there have been few attempts to comprehensively compare those in use across disciplines.

Five special classes of monotone operators are studied here:
strictly monotone, $3$-cyclic monotone, $3^*$-monotone, paramonotone and maximal monotone.
All possible relationships between these five properties are
explored for linear operators in $\RR^2$, $\RR^n$, and in general Hilbert
space, and the results are summarized in Tables~\ref{table:linear2D} and
\ref{table:linear} and in 
Figures~\ref{fig:4compareL}, \ref{fig:4compareLR2}, and \ref{fig:4compareLRn}.

\begin{definition}[paramonotone]
    \label{d:PM}
    An operator $T: X \to 2^X$ is said to be \emph{paramonotone}
    if $T$ is monotone and for $x^* \in Tx, y^* \in Ty$,
    $\langle x - y, x^* - y^* \rangle = 0$ implies that
    $x^* \in Ty$ and $y^* \in Tx$.
\end{definition}

A number of iterative methods for solving (\ref{e:VIP}) have required paramonotonicity to converge.
Examples include an interior point method using Bregman functions \cite{CensorIusemZenios},
an outer approximation method \cite{BurachikJurandirSv}, 
and proximal point algorithms \cite{AuslenderHaddou95} \cite{BurachikIusem98}.
Often, as in \cite{CruzIusem2010}, with more work it is possible to show convergence with paramonotonicity where previously stronger conditions, such as strong monotonicity, were required.
Indeed, the condition first emerged in this context \cite{Bruck75} as a sufficient condition for the convergence of a projected-gradient like method.
For more on the theory of paramonotone operators and why this condition is
important for variational inequality problems, 
see \cite{Iusem98} and \cite{HSD2004}.

\begin{definition}[strictly monotone]
    \label{d:SM}
    An operator $T: X \to 2^X$ is said to be \emph{strictly monotone} 
    if $T$ is monotone and for all $(x,x^*), (y,y^*) \in \gra T$,
    $\langle x - y, x^* - y^* \rangle = 0$ implies that
    $x = y$.
\end{definition}

Strict monotonicity is a stronger condition than paramonotonicity (Fact~\ref{f:SMisPM}),
and the strict monotonicity of an operator $T$ guarantees the uniqueness of a solution to the variational
inequality problem (see for instance \cite{FacchineiPang}).
These operators are somewhat analogous to the subdifferentials of strictly convex functions.

We adopt the notation of \cite{ZeidlerIIB} 
and use the term $3^*$-monotone, although this property was first introduced with no name.
The property was first referenced simply by ``$*$'' \cite{BrezisHaraux76} by Br\'ezis and Haraux, and such operators were sometimes called (BH)-operators \cite{Chu96} in honour of these original authors. 
More recently the property has also taken on the name ``rectangular''
since the domain of the Fitzpatrick function of a monotone operator is rectangular
precisely when the operator is $3^*$ monotone \cite{SimonsLC1}.

\begin{definition}[$3^*$ monotone]
  \label{d:3starmonotone}
  An operator $T:X \to 2^X$ is said to be $3^*$-monotone
  if $T$ is monotone and
  for all $z$ in the domain of $T$ and for all $x^*$ in the range of $T$
  \begin{equation}
    \sup_{(y,y^*)\in\gra T} \langle z-y , y^*-x^* \rangle < +\infty.
    \label{e:3starmonotone}
  \end{equation}
\end{definition}

$3^*$-monotonicity has the important property in that
if $T_1$ and $T_2$ are $3^*$-monotone, then the sum of their ranges is the range of their sum.
For instance,
if two operators are $3^*$-monotone, and one is surjective,
then if the sum is maximal monotone it is also surjective.
Furthermore, if both are continuous monotone linear operators,
and at least one is $3^*$-monotone, then the kernel of the sum
is the intersection of the kernels \cite{Bauschke44}.
This property can be used, as shown in \cite{BrezisHaraux76},
to determine when solutions to $T^{-1}(0)$ exist by demonstrating
that $0$ is in the interior (or is not in the closure) of the sum of the ranges of an 
intelligent decomposition of a difficult to evaluate maximal monotone operator.
It has also been shown for linear relations on Banach spaces
that $3^*$-monotonicity guarantees the existence of solutions to the primal-dual problem pairs in \cite{PennanenThesis}. It should also be noted that operators with bounded range \cite{ZeidlerIIB} and strongly coercive operators \cite{BrezisHaraux76} are $3^*$-monotone.

\begin{definition}[$n$-cyclic monotone] \label{d:ncyclicmonotone}
    Let $n \geq 2$.  An operator $T: X \to 2^X$ 
    is said to be $n$-cyclic monotone if
    \begin{equation}
			\label{e:nCM}
        \left.
        \begin{array}{rcl}
            (x_1, x_1^*) && \in \gra T \\
            (x_2, x_2^*) && \in \gra T \\
            \cdots && \in \gra T \\
            (x_n, x_n^*) && \in \gra T\\
            x_{n+1} &=& x_1
        \end{array}
        \right\}
        \Rightarrow
        \sum_{i=1}^n \langle x_i - x_{i+1}, x_i^* \rangle \geq 0
    \end{equation}
  A \emph{cyclical monotone} operator is one that is $n$-cyclic monotone for all
  $n \in \NN$.  
\end{definition}

Note that 2-cyclic monotonicity is equivalent to monotonicity.
By substituting $(a_n, a_n^*) := (a_1, a_1^*)$, 
it easily follows from the definition that 
any $n$-cyclic monotone operator is $(n-1)$-cyclic monotone.
$1$-cyclic monotonicity is not defined, since the $n=1$ case for
(\ref{e:nCM}) is trivial.  
$3$-cyclic monotone operators serve to represent a special case of $n$-cyclic monotone operators
that is also a stronger condition than $3^*$-monotonicity.  Of note, all
subdifferentials of convex functions are cyclical monotone
\cite{RockWets}.

\begin{definition}[maximality]
  An operator is \emph{maximal $n$-cyclic monotone}
  if its graph cannot be extended while preserving $n$-cyclic monotonicity.
  A \emph{maximal monotone} operator is a maximal $2$-cyclic monotone operator.
  A \emph{maximal cyclical monotone} operator is a cyclical
  monotone operator such that all proper graph extensions are not cyclical monotone.
\end{definition}

There is a rich literature on the theory (see \cite{BorweinConvex} for a good overview)
and application (for instance \cite{EcksteinBertsekas1992})
of maximal monotone operators.  Furthermore, it is well known that a maximal monotone operator $T$ has the property that
$T^{-1}(0)$ is convex, a property shared by paramonotone operators with convex domain (Proposition~\ref{p:Tinverse0PM}),
and analogous to the fact that the minimizers of a convex function form a convex set.
Maximal monotonicity is also an important property for general differential inclusions
\cite{Papageorgiou1998} \cite{Staicu1994}.

\begin{definition}[Five classes of monotone operator]
    An operator $T: X \to 2^X$ is said to be [Class] 
    (with abbreviation [Code]) 
    if and only if $T$ is monotone and for every 
    $(x,x^*), (y,y^*), (z,z^*)$ in \gra T
    one has [Condition].\\
  \begin{tabular}{l l l}
    \label{table:definitions}
      Code & Class & Condition (A) \\
      \hline
      & monotone & $\langle x-y, x^* - y^* \rangle \geq 0$ \\
      PM & paramonotone  & $\langle x - y, x^* - y^*\rangle = 0 \Rightarrow (x,y^*), (y,x^*) \in \gra T$ \\
      SM & strictly monotone  & $\langle x - y, x^* - y^*\rangle = 0 \Rightarrow x = y$ \\
      3CM & $3$-cyclic monotone  & $\langle x - y, x^*\rangle + \langle y-z, y^*\rangle + \langle z - x, z^* \rangle \geq 0 $\\
      MM & maximal monotone  & $(\forall a \in X) (\forall a^* \in X)$ \\
          & $\quad \langle x-a, x^* - a^* \rangle \geq 0 \Rightarrow (x,x^*) \in \gra T$\\
      3* & $3^*$-monotone  & $\sup_{(a,a^*)\in\gra T} \langle z-a , a^*-x^* \rangle < +\infty $ 
  \end{tabular}
\\
The order above, PM-SM-3CM-MM-3*, is fixed to allow a binary code representation of the classes to which an operator belongs.  For instance, an operator with the code 10111 is paramonotone, not strictly monotone, $3$-cyclic monotone, maximal monotone, and $3^*$-monotone.
\end{definition}

After noting some general relationships between these classes in Section~\ref{s:prelim},
we note in Section~\ref{s:productSpace} that monotone operators belonging to particular combinations of these classes can be constructed in a product space.

Linear relations are a multi-valued extension of linear operators, and are
defined by those operators whose graph forms a vector space.  This is a
natural extension to consider as monotone operators are often multivalued.  We
consider linear relations in Section~\ref{s:LR}, and explore their
characteristics and structure.
Of particular note, we fully explore the manner in which linear relations can be multivalued and remark on a curious property of linear relations whose domains are not closed.
Finally, we obtain a generalization to the fact that bounded linear operators
that are $3^*$-monotone are also paramonotone (a corollary to a result in
\cite{BrezisHaraux76}), with conditions different from those in
\cite{HXLtoappear}, and demonstrate by example that there is $3^*$-monotone linear relation that is not paramonotone.

In Section~\ref{s:LRPM}, 
we list various examples of linear
operators satisfying or failing to satisfy the 5 properties defined above.
The examples are chosen to have full domain, low dimension, and be continuous where possible.
This is shown to yield a complete
characterization of the dependence or independence of these five classes of
monotone operator in $\RR^2$, $\RR^n$, and in a general Hilbert space $X$.  
One result of this section is that paramonotone and linear operators in $\RR^2$ are exactly the
symmetric or strictly monotone operators in $\RR^2$.

We assume throughout that $X$ is a real Hilbert space,
with inner product $\langle \cdot, \cdot \rangle$.
When an operator $T : X \to 2^X$ is such that for all $x \in X$,
$Tx$ contains at most one element, such operators are called \emph{single-valued}.
When $T$ is single-valued, for brevity $Tx$ is at times considered as a point rather than as a set (ie: $x^* \in Tx$).
The \emph{orthogonal complement} of a set $C \subset X$
is denoted by $C^{\perp}$ and defined by
\begin{equation}
  C^{\perp} := \{x \in X : \langle x, c \rangle = 0 ~\forall c \in C \}
  \label{e:perp}
\end{equation}
Note that for any set $C \subset X$, the set $C^{\perp}$ is closed in $X$.
$P_V$ denotes the metric projection where $V$ is a closed subspace of $X$.
We use the convention that for set addition $A + \emptyset = \emptyset$,
where $\emptyset$ is the empty set.
A monotone extension $\tilde{T} : X \to 2^X$ of a monotone operator $T: X \to 2^X$ is a monotone operator
such that $\gra T \subsetneq \gra \tilde{T}$,
where $\gra T := \{(x,x^*) : x \in \dom T, x^* \in Tx \}$.
A selection of an operator $T : X \to 2^X$ is an operator $\tilde{T}$ such that $\gra \tilde{T} \subset \gra T$,
and a single-valued selection of $T$ is such an operator $\tilde{T}$ where $\tilde{T} : X \to X$.

\section{Preliminaries} \label{s:prelim}

The following arises from the definition of strict monotonicity 
and paramonotonicity.

\begin{fact}
  \label{f:SMisPM}
  Any strictly monotone operator $T : X \to 2^X$
  is also paramonotone.
\end{fact}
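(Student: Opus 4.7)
The plan is to unwind the two definitions and observe that strict monotonicity collapses the equality case of the monotonicity inequality to the trivial case $x=y$, at which point the paramonotonicity conclusion becomes immediate.

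More concretely, suppose $T$ is strictly monotone, fix $(x,x^*),(y,y^*)\in\gra T$, and assume $\langle x-y,x^*-y^*\rangle=0$. By strict monotonicity (Definition~\ref{d:SM}), this forces $x=y$. Then $Tx=Ty$, so $x^*\in Tx=Ty$ and $y^*\in Ty=Tx$, which is exactly the conclusion required by Definition~\ref{d:PM}. Since $T$ is also monotone by hypothesis, $T$ is paramonotone.

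There is essentially no obstacle here; the statement is really a direct comparison of the two definitions. The only thing worth flagging is the implicit use of the fact that when $x=y$ the pair $(x,y^*)=(y,y^*)$ and $(y,x^*)=(x,x^*)$ already lie in $\gra T$, so no further assumption on $T$ (e.g.\ single-valuedness) is needed.
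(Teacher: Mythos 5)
Your proof is correct and matches the paper's treatment: the paper states this Fact as following directly from the two definitions, and your argument is exactly that unwinding (equality case forces $x=y$, whence $x^*\in Ty$ and $y^*\in Tx$ trivially). Nothing further is needed.
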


Two synonymous definitions
of $3$-cyclic monotonicity are worth explicitly stating.
For an operator $T: X \to 2^X$ to be $3$-cyclic monotone,
every 
$(x,x^*),(y,y^*),(z,z^*) \in \gra T$ must satisfy
\begin{equation} 
  \label{e:3cyclicgeq0}
  \langle x-y, x^* \rangle + \langle y-z, y^* \rangle + \langle z-x, z^* \rangle \geq 0,
\end{equation}
or equivalently 
\begin{equation}
  \langle z - y, y^* - x^* \rangle \leq \langle x - z, x^* - z^*\rangle.
  \label{e:3cm2ineq}
\end{equation}
From (\ref{e:3cm2ineq}), the following fact is obvious.

\begin{fact}
  \label{f:3CMis3*}
  Any $3$-cyclic monotone operator $T : X \to 2^X$
  is also $3^*$-monotone.
\end{fact}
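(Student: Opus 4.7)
The plan is to read off the conclusion directly from the reformulation (\ref{e:3cm2ineq}) of $3$-cyclic monotonicity, which is the whole point of having stated that equivalent form just before the fact. The definition of $3^*$-monotonicity requires, for each fixed $z\in\dom T$ and each fixed $x^*\in\ran T$, that the quantity $\langle z-y, y^*-x^*\rangle$ stay bounded above as $(y,y^*)$ ranges over $\gra T$; but the right-hand side of (\ref{e:3cm2ineq}) is a single real number that does not depend on $(y,y^*)$, so it already furnishes such a bound once we can legitimately plug in a compatible $x$ and $z^*$.

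Concretely, first I would note that if either $\dom T$ or $\ran T$ is empty then (\ref{e:3starmonotone}) is vacuous, so assume both are nonempty and fix $z\in\dom T$ and $x^*\in\ran T$. Then I would pick any $z^*\in X$ with $(z,z^*)\in\gra T$ (which exists because $z\in\dom T$) and any $x\in X$ with $(x,x^*)\in\gra T$ (which exists because $x^*\in\ran T$). With these two selections in hand, the triple $(x,x^*),(y,y^*),(z,z^*)$ lies in $\gra T$ for every $(y,y^*)\in\gra T$, and (\ref{e:3cm2ineq}) yields
\[
  \langle z-y,\, y^*-x^*\rangle \;\leq\; \langle x-z,\, x^*-z^*\rangle.
\]
Taking the supremum over $(y,y^*)\in\gra T$ on the left leaves the right-hand side unchanged, so
\[
  \sup_{(y,y^*)\in\gra T} \langle z-y,\, y^*-x^*\rangle \;\leq\; \langle x-z,\, x^*-z^*\rangle \;<\;+\infty,
\]
which is exactly (\ref{e:3starmonotone}).

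There is essentially no obstacle here: the content of the fact was already absorbed into the verification that (\ref{e:3cyclicgeq0}) and (\ref{e:3cm2ineq}) are equivalent, and the only thing to be careful about is the (trivial) selection of a $z^*\in Tz$ and an $x\in T^{-1}x^*$ to freeze the right-hand side of (\ref{e:3cm2ineq}) before taking the supremum. The proof should therefore be no more than two or three lines in the final write-up.
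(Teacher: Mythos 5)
Your proof is correct and is essentially the paper's argument: the paper simply declares the fact ``obvious'' from the reformulation (\ref{e:3cm2ineq}), and your write-up just makes explicit the (routine) selection of $z^*\in Tz$ and $x\in T^{-1}x^*$ so that the right-hand side of (\ref{e:3cm2ineq}) is a fixed bound independent of $(y,y^*)$, from which (\ref{e:3starmonotone}) follows by taking the supremum.
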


Another relationship between these classes
of monotone operator was discovered in 2006.

\begin{proposition} \label{p:3-2cyclic} \cite{Hadjisavvas06}
    If $T$ is $3$-cyclic monotone and maximal ($2$-cyclic) monotone,
    then $T$ is paramonotone.
\end{proposition}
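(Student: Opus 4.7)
The plan is to directly verify that the two required pairs $(x,y^*)$ and $(y,x^*)$ satisfy the monotone relation with every point in $\gra T$, and then invoke maximality of $T$ to conclude these pairs actually lie in $\gra T$.

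More concretely, suppose $(x,x^*),(y,y^*)\in\gra T$ with $\langle x-y, x^*-y^*\rangle=0$, equivalently
\[
\langle x-y,x^*\rangle \;=\; \langle x-y,y^*\rangle.
\]
Pick an arbitrary $(z,z^*)\in\gra T$. The first key step is to apply the $3$-cyclic monotonicity inequality (\ref{e:3cyclicgeq0}) to the triple $(x,x^*),(y,y^*),(z,z^*)$, obtaining
\[
\langle x-y,x^*\rangle+\langle y-z,y^*\rangle+\langle z-x,z^*\rangle\geq 0.
\]
Substituting the equality above for the first term and collapsing the telescoping terms in $y^*$ yields $\langle x-z, y^*-z^*\rangle\geq 0$. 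Since $(z,z^*)\in\gra T$ was arbitrary, the pair $(x,y^*)$ is monotonically related to every element of $\gra T$, so maximality forces $(x,y^*)\in\gra T$.

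For the symmetric conclusion $(y,x^*)\in\gra T$, I would repeat the argument applying $3$-cyclic monotonicity to the permuted triple $(y,y^*),(x,x^*),(z,z^*)$ in that cyclic order; the same substitution (this time rewriting $\langle y-x,y^*\rangle = \langle y-x,x^*\rangle$) collapses the expression to $\langle y-z,x^*-z^*\rangle\geq 0$, and maximality again delivers $(y,x^*)\in\gra T$.

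There is no serious obstacle here: the argument is essentially a one-line computation per pair, with the equality hypothesis absorbing exactly the ``wrong'' term produced by $3$-cyclic monotonicity. The only thing to be careful about is the order of the triple in (\ref{e:3cyclicgeq0}), since the inequality is cyclic rather than symmetric; the two conclusions $(x,y^*)\in\gra T$ and $(y,x^*)\in\gra T$ require two different cyclic orderings, not a single application.
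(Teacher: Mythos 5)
Your proposal is correct and follows essentially the same argument as the paper: apply the $3$-cyclic inequality to the triple with an arbitrary $(z,z^*)$, absorb the offending term via the equality $\langle x-y,x^*\rangle=\langle x-y,y^*\rangle$, obtain $\langle x-z,y^*-z^*\rangle\geq 0$ for all $(z,z^*)\in\gra T$, and invoke maximality; the second inclusion is obtained exactly as in the paper by exchanging the roles of $x$ and $y$ (your permuted cyclic ordering). No gaps.
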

\begin{proof}
Suppose that for some choice of $(x, x^*), (y, y^*) \in \gra(T)$,
$\langle x - y, x^* - y^* \rangle = 0$,
so 
$\langle y - x, x^* \rangle = \langle y - x, y^* \rangle$.
Since $T$ is $3$-cyclic monotone, every $(z, z^*) \in \gra(T)$ satisfies
\begin{eqnarray*}
    0 &\geq& 
    \langle y - x, x^* \rangle + \langle z - y, y^* \rangle
    + \langle x - z, z^* \rangle \\
    &=& \langle - x, y^* \rangle
	+\langle z, y^* \rangle + \langle x - z, z^* \rangle \\
    &=& \langle z - x, y^* \rangle + \langle x - z, z^* \rangle \\
    &=& \langle x - z, z^* - y^* \rangle
\end{eqnarray*}
and so
\[\langle x - z, y^* - z^* \rangle \geq 0 \quad \forall(z, z^*) \in \gra(T). \]
Since $T$ is maximal monotone, $y^* \in Tx$.
By exchanging the roles of $x$ and $y$ above, 
it also holds that $x^* \in T(y)$, 
and so $T$ is paramonotone.
\end{proof}


When finding the zeros of a monotone operator, it can be useful to know if the solution set is convex or not.  It is well known that for a maximal monotone operator $T$, $T^{-1}(0)$ is a closed convex set
(see for instance \cite{BauschkeBook}).  
A similar result also holds for paramonotone operators.

\begin{proposition}
  \label{p:Tinverse0PM}
  Let $T:X \to 2^X$ be a paramonotone operator with convex domain.
  Then $T^{-1}(0)$ is a convex set.
\end{proposition}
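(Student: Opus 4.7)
The plan is to take $x,y\in T^{-1}(0)$ and $\lambda\in[0,1]$, set $z:=\lambda x+(1-\lambda)y$, and show $0\in Tz$. The endpoint cases $\lambda\in\{0,1\}$ are immediate, so I focus on $\lambda\in (0,1)$. Because $\dom T$ is assumed convex and $x,y\in\dom T$, we have $z\in\dom T$, so there exists some $z^{*}\in Tz$. The strategy is to squeeze: use monotonicity against the two known pairs $(x,0)$ and $(y,0)$ to force a specific inner product to be zero, and then invoke paramonotonicity to conclude $0\in Tz$.

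More concretely, I first apply monotonicity to the pairs $(x,0),(z,z^{*})$ and $(y,0),(z,z^{*})$ to obtain
\begin{equation}
\langle x-z,\, -z^{*}\rangle \geq 0 \quad\text{and}\quad \langle y-z,\, -z^{*}\rangle \geq 0,
\end{equation}
i.e., $\langle x-z, z^{*}\rangle\leq 0$ and $\langle y-z,z^{*}\rangle\leq 0$. Next I take the convex combination of these with weights $\lambda$ and $1-\lambda$; the point of choosing $z$ as the convex combination is precisely that $\lambda(x-z)+(1-\lambda)(y-z)=0$, so the sum collapses to $0$. Since each summand is nonpositive and the sum is zero, each term must vanish, giving $\langle x-z,z^{*}\rangle=0$ (and similarly for $y$). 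Equivalently, $\langle x-z,\,0-z^{*}\rangle=0$.

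At this point paramonotonicity applied to the pairs $(x,0),(z,z^{*})\in\gra T$ yields $0\in Tz$ and $z^{*}\in Tx$; the first of these is what we need, namely $z\in T^{-1}(0)$. The only mild obstacle is remembering to use the convex domain hypothesis to guarantee the existence of $z^{*}\in Tz$ in the first place, since without a point in $Tz$ there is nothing to apply monotonicity or paramonotonicity to. Everything else is a short chain of inner-product manipulations, and no closedness or maximality of $T$ is required for the conclusion, matching the statement of the proposition.
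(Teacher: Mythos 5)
Your proof is correct and follows essentially the same route as the paper: apply monotonicity of $T$ against the two endpoint pairs $(x,0)$ and $(y,0)$, use the collinearity of the convex combination to force $\langle x-z, 0-z^{*}\rangle=0$, and then invoke paramonotonicity to get $0\in Tz$ (the paper phrases the vanishing step via proportionality of the two inner products rather than a weighted sum, but the computation is the same). The paper likewise uses the convex-domain hypothesis only to guarantee $Tz\neq\emptyset$, so there is nothing missing.
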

\begin{proof}
	Suppose $T^{-1}(0)$ is nonempty.
  Let $x,y,z \in X$ such that $0 \in Tx$, $0 \in Tz$, and
  $y = \alpha x + (1-\alpha)z$ for some $\alpha \in ]0,1[$.
  Then, $x-y = (1-\alpha)(x-z)$ and
  $y-z = \alpha (x-z)$, so $x-y = \frac{1-\alpha}{\alpha} (y-z)$.
  Since $T$ has convex domain, $Ty \neq \emptyset$.
  By the monotonicity of $T$, for all $y^* \in Ty$
  \[0 \leq \langle x-y, -y^* \rangle = \frac{1-\alpha}{\alpha} \langle y-z, -y^* \rangle
  \qquad \textrm{and} \qquad
  0 \leq \langle y - z, y^* \rangle, \]
  and so $\langle y-z, y^* \rangle = 0$.
  Therefore, by the paramonotonicity of $T$, $0 \in T(y)$, and so the set
  $T^{-1}(0)$ is convex.
\end{proof}

However, if an operator is not maximal monotone, there is no guarantee that $T^{-1}(0)$ is closed,
even if paramonotone, as the operator $T : \RR \to \RR$ below demonstrates:
\begin{equation}
  \label{e:PMwithoutclosedinverse}
  Tx := \left\{ \begin{array}{ll}
    -1, & x \leq -1, \\
    0, & x \in ]-1,1[, \\
    1, & x \geq 1.
  \end{array} \right. 
\end{equation}

\section{Monotone operators on product spaces}
\label{s:productSpace}

Let $X_1$ and $X_2$ be Hilbert spaces,
and consider set valued operators
$T_1: X_1 \to 2^{X_1}$ and $T_2: X_2 \to 2^{X_2}$.
The product operator
$T_1 \times T_2 : X_1 \times X_2 \to 2^{X_1 \times X_2}$
is defined as
$(T_1 \times T_2) (x_1, x_2) :=
\{(x_1^*, x_2^*)$ : $x_1^* \in T_1 x_1$
and $x_2^* \in T_2 x_2$ \}.

\begin{proposition}
  \label{p:productM}
    If both $T_1$ and $T_2$
    are monotone, then
    the product operator 
    $T_1 \times T_2$ 
    is also monotone.
\end{proposition}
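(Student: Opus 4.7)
The plan is to verify the monotonicity inequality directly from the definition, by picking arbitrary pairs in $\gra(T_1\times T_2)$ and decomposing the product inner product into a sum of inner products on the factors. This is a routine unpacking of definitions; I do not expect any genuine obstacle.

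More concretely, I would begin by selecting arbitrary $((x_1,x_2),(x_1^*,x_2^*))$ and $((y_1,y_2),(y_1^*,y_2^*))$ in $\gra(T_1\times T_2)$. By the definition of the product operator, this immediately gives $x_i^*\in T_i x_i$ and $y_i^*\in T_i y_i$ for $i=1,2$. Next I would recall that the Hilbert space inner product on $X_1\times X_2$ is
\begin{equation*}
\langle (a_1,a_2),(b_1,b_2)\rangle = \langle a_1,b_1\rangle + \langle a_2,b_2\rangle,
\end{equation*}
so that the relevant quantity decomposes as
\begin{equation*}
\langle (x_1,x_2)-(y_1,y_2),(x_1^*,x_2^*)-(y_1^*,y_2^*)\rangle = \langle x_1-y_1,x_1^*-y_1^*\rangle + \langle x_2-y_2,x_2^*-y_2^*\rangle.
\end{equation*}

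Finally, applying the monotonicity of $T_1$ to the first summand and of $T_2$ to the second yields that each term is nonnegative, hence so is their sum. Since the chosen graph elements were arbitrary, $T_1\times T_2$ is monotone. The only thing to be mildly careful about is making explicit the identification of the inner product on the product Hilbert space, but this is standard and poses no real difficulty.
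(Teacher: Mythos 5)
Your proof is correct and is essentially identical to the paper's: both decompose the inner product on $X_1 \times X_2$ into the two factor inner products and apply the monotonicity of $T_1$ and $T_2$ to each summand. No issues.
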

\begin{proof}
  For any points
  $\left( (x_1,x_2), (x^*_1, x^*_2) \right),
  \left( (y_1,y_2), (y^*_1, y^*_2) \right) \in \gra (T_1 \times T_2)$,
  \[ 
  \begin{array}{cl}
    & \langle (x_1,x_2) - (y_1,y_2), (x^*_1, x^*_2) - (y^*_1, y^*_2) \rangle \\
    =& \langle x_1 - y_1, x_1^* - y_1^* \rangle
      + \langle x_2 - y_2, x_2^* - y_2^*\rangle \geq 0.
  \end{array}
  \]
\end{proof}

\begin{proposition}
    \label{p:productparamonotone}
    If both $T_1$ and $T_2$
    are paramonotone, then
    the product operator 
    $T_1 \times T_2$ 
    is also paramonotone.
\end{proposition}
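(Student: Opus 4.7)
The plan is to leverage the monotonicity of $T_1\times T_2$ already established in Proposition~\ref{p:productM} together with the fact that the inner product on the product Hilbert space decomposes additively. Concretely, for any two points $((x_1,x_2),(x_1^*,x_2^*))$ and $((y_1,y_2),(y_1^*,y_2^*))$ in $\gra(T_1\times T_2)$, we have the identity
\[
\langle (x_1,x_2)-(y_1,y_2),(x_1^*,x_2^*)-(y_1^*,y_2^*)\rangle
= \langle x_1-y_1,x_1^*-y_1^*\rangle + \langle x_2-y_2,x_2^*-y_2^*\rangle,
\]
and each summand is nonnegative by the monotonicity of $T_1$ and $T_2$.

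First I would assume the hypothesis of paramonotonicity, namely that the left-hand side above vanishes. Since each summand on the right is $\geq 0$ and they sum to zero, both must equal zero individually. This is the key (and essentially only) nontrivial observation.

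Next, I would apply the paramonotonicity of $T_1$ to $\langle x_1-y_1,x_1^*-y_1^*\rangle=0$ to conclude $x_1^*\in T_1 y_1$ and $y_1^*\in T_1 x_1$, and symmetrically apply the paramonotonicity of $T_2$ to obtain $x_2^*\in T_2 y_2$ and $y_2^*\in T_2 x_2$. Combining these two conclusions according to the definition of $T_1\times T_2$ immediately yields $(x_1^*,x_2^*)\in (T_1\times T_2)(y_1,y_2)$ and $(y_1^*,y_2^*)\in (T_1\times T_2)(x_1,x_2)$, which is exactly the paramonotonicity conclusion.

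There is no serious obstacle here; the proof is essentially a one-line decomposition argument. The only point requiring a moment's care is explicitly noting that a sum of nonnegative numbers equals zero iff each vanishes, which is what unlocks the application of paramonotonicity componentwise. One could present this proof in three or four lines without any real calculation beyond the inner-product decomposition already used in Proposition~\ref{p:productM}.
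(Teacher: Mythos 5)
Your proof is correct and follows essentially the same route as the paper's: decompose the inner product on the product space, use monotonicity of each factor to conclude that both nonnegative summands vanish, and then apply paramonotonicity componentwise. No changes needed.
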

\begin{proof}
If 
$x_i^* \in T_i x_i$, $y_i^* \in T_i y_i$
for $i \in \{1,2\}$ and
\[
\langle (x_1, x_2) - (y_1, y_2), 
(x_1^*, x_2^*) - (y_1^*, y_2^*) \rangle = 0, \]
then
$\langle x_i - y_i, x_i^* - y_i^* \rangle = 0$ for $i \in \{1,2\}$
since both $T_1$ and $T_2$ are monotone.
By the paramonotonicity of $T_1$ and $T_2$,
$y_i^* \in T_i x_i$ and $x_i^* \in T_i y_i$
for $i \in \{1,2\}$,
and so $(x_1^*, x_2^*) \in T_1 \times T_2 (y_1, y_2)$
and $(y_1^*, y_2^*) \in T_1 \times T_2 (x_1, x_2)$.
\end{proof}

By following the same proof structure as Proposition~\ref{p:productparamonotone}, a similar result immediately follows for some other monotone classes.

\begin{proposition}
    \label{p:productSMCM3*}
    If both $T_1$ and $T_2$ belong to the same monotone class, 
    where that class is one of strict, $n$-cyclic, or $3^*$-monotonicity,
    then so does their product operator $T_1 \times T_2$.
\end{proposition}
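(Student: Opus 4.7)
The plan is to mimic the structure of the proof of Proposition~\ref{p:productparamonotone} by exploiting the orthogonal decomposition of the inner product on $X_1 \times X_2$, namely
\[
\langle (u_1,u_2),(v_1,v_2)\rangle = \langle u_1,v_1\rangle + \langle u_2,v_2\rangle,
\]
together with the fact that $\gra(T_1\times T_2) = \gra T_1 \times \gra T_2$, which also gives $\dom(T_1\times T_2) = \dom T_1 \times \dom T_2$ and $\ran(T_1\times T_2) = \ran T_1 \times \ran T_2$. I would handle the three cases one after another, in each case reducing the hypothesis on $T_1\times T_2$ to the corresponding hypothesis on each factor.

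For strict monotonicity, I would take $((x_1,x_2),(x_1^*,x_2^*))$ and $((y_1,y_2),(y_1^*,y_2^*))$ in $\gra(T_1\times T_2)$ with $\langle (x_1,x_2)-(y_1,y_2),(x_1^*,x_2^*)-(y_1^*,y_2^*)\rangle = 0$. Expanding via the decomposition above produces a sum of two nonnegative terms (by monotonicity of each $T_i$) equal to zero, so each vanishes. Strict monotonicity of $T_i$ then forces $x_i=y_i$, whence $(x_1,x_2)=(y_1,y_2)$. For $n$-cyclic monotonicity, I would take any $n$ points $((x_1^k,x_2^k),(x_1^{*k},x_2^{*k}))_{k=1}^n$ in $\gra(T_1\times T_2)$ (with indices cyclic mod $n$) and apply the same decomposition to the cyclic sum:
\[
\sum_{k=1}^n \langle (x_1^k,x_2^k)-(x_1^{k+1},x_2^{k+1}),(x_1^{*k},x_2^{*k})\rangle = \sum_{i=1}^{2}\sum_{k=1}^n \langle x_i^k-x_i^{k+1},x_i^{*k}\rangle \geq 0,
\]
since each inner sum is nonnegative by $n$-cyclic monotonicity of $T_i$.

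For $3^*$-monotonicity, I would fix $(z_1,z_2)\in \dom(T_1\times T_2)$ and $(x_1^*,x_2^*)\in \ran(T_1\times T_2)$, which by the product structure of domain and range means $z_i\in\dom T_i$ and $x_i^*\in\ran T_i$. The key observation is that the supremum over $\gra(T_1\times T_2)$ decouples as a supremum over the product $\gra T_1\times \gra T_2$, so
\[
\sup_{\gra(T_1\times T_2)}\langle (z_1,z_2)-(a_1,a_2),(a_1^*,a_2^*)-(x_1^*,x_2^*)\rangle = \sum_{i=1}^{2}\sup_{(a_i,a_i^*)\in\gra T_i}\langle z_i-a_i,a_i^*-x_i^*\rangle,
\]
and each of the two suprema on the right is finite by $3^*$-monotonicity of $T_i$.

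None of these steps is a real obstacle; the only thing that requires care is the $3^*$ case, where one must first confirm that $\dom$ and $\ran$ of a product operator factor as products so that the hypotheses on $z_i$ and $x_i^*$ really transfer to each factor, and that the supremum over a product set decouples into a sum of suprema. Once these two bookkeeping facts are in place, the proof is essentially the same template as Proposition~\ref{p:productparamonotone}.
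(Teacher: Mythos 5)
Your proof is correct and follows exactly the route the paper intends: the paper omits the details and simply notes that the argument is the same template as Proposition~\ref{p:productparamonotone}, namely splitting the inner product on $X_1\times X_2$ coordinatewise, which is precisely what you carry out for the strict, $n$-cyclic, and $3^*$ cases. Your extra care with $\dom(T_1\times T_2)=\dom T_1\times\dom T_2$, $\ran(T_1\times T_2)=\ran T_1\times\ran T_2$, and the decoupling of the supremum in the $3^*$ case is exactly the bookkeeping the paper leaves implicit.
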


\begin{proposition}
    \label{p:productMM}
    If both $T_1$ and $T_2$
    are maximal monotone, then
    the product operator 
    $T_1 \times T_2$ 
    is also maximal monotone.
\end{proposition}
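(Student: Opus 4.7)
My plan is to prove monotonicity by invoking Proposition~\ref{p:productM}, then establish maximality by the contrapositive: assume $((a_1,a_2),(a_1^*,a_2^*)) \notin \gra(T_1 \times T_2)$ and produce a point of $\gra(T_1 \times T_2)$ that strictly violates monotonicity with it. The characterization I will rely on is the standard one — that a monotone operator $T$ is maximal monotone if and only if, for every $(a,a^*) \notin \gra T$, there exists $(x,x^*) \in \gra T$ with $\langle a - x, a^* - x^* \rangle < 0$.

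Since $((a_1,a_2),(a_1^*,a_2^*)) \notin \gra(T_1 \times T_2)$, at least one of $a_1^* \notin T_1 a_1$ or $a_2^* \notin T_2 a_2$ holds. Without loss of generality I would take $a_1^* \notin T_1 a_1$. By the maximality of $T_1$, I can then choose $(y_1, y_1^*) \in \gra T_1$ with
\[
\langle a_1 - y_1, a_1^* - y_1^* \rangle < 0,
\]
and set $\varepsilon := -\langle a_1 - y_1, a_1^* - y_1^* \rangle > 0$. The goal is then to pick $(y_2, y_2^*) \in \gra T_2$ so that $\langle a_2 - y_2, a_2^* - y_2^* \rangle < \varepsilon$; summing the two inequalities will then yield a strict violation against the candidate extension point.

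To get $(y_2, y_2^*)$, I would split on two cases. If $a_2^* \in T_2 a_2$, then simply pick $(y_2, y_2^*) := (a_2, a_2^*) \in \gra T_2$, which makes $\langle a_2 - y_2, a_2^* - y_2^* \rangle = 0 < \varepsilon$. If instead $a_2^* \notin T_2 a_2$, the maximality of $T_2$ supplies some $(y_2, y_2^*) \in \gra T_2$ with $\langle a_2 - y_2, a_2^* - y_2^* \rangle < 0 < \varepsilon$. In either case, $((y_1, y_2),(y_1^*, y_2^*)) \in \gra(T_1 \times T_2)$ and
\[
\langle (a_1,a_2) - (y_1,y_2), (a_1^*, a_2^*) - (y_1^*, y_2^*) \rangle < -\varepsilon + \varepsilon = 0,
\]
contradicting the presumption that the extension preserves monotonicity.

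The only subtle point — and the reason the case split is needed — is that the strict inequality obtained from maximality of $T_1$ alone gives only a finite negative quantity, whereas the second coordinate could contribute an arbitrarily large positive amount if I chose $(y_2, y_2^*)$ carelessly; I have to pick it either to annihilate the second inner product (Case~A) or to keep it nonpositive (Case~B). Everything else is a direct calculation, so this case-selection is the main (minor) obstacle. Other routes, such as using Minty's surjectivity characterization of maximal monotonicity applied coordinatewise, would also work and would be shorter, but the direct approach keeps the proof self-contained within the tools already introduced in the paper.
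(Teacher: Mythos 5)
Your proof is correct and takes essentially the same route as the paper: both arguments reduce to the coordinate at which the candidate point $((a_1,a_2),(a_1^*,a_2^*))$ fails to be in the graph, use the maximality of that factor to extract a strictly violating pair, and then neutralize the other coordinate so the sum of the two inner products stays strictly negative. The only cosmetic difference is that you case-split on whether $(a_2,a_2^*)\in\gra T_2$, whereas the paper deduces $(x_2,x_2^*)\in\gra T_2$ from the assumed monotone-relatedness (using maximality of $T_2$) and then contradicts with the pair $((z_1,x_2),(z_1^*,x_2^*))$, which is exactly your Case~A point.
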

\begin{proof} 
	Suppose $T_1 \times T_2$ is not maximal monotone.
	Then there exists a point
	$\left( (x_1 , x_2), (x_1^*, x_2^*) \right) \notin \gra (T_1 \times T_2)$
  such that for all 
	$\left( (y_1, y_2), (y_1^*, y_2^*) \right) \in \gra (T_1 \times T_2)$
  \begin{equation}
    \label{e:productMM}
	\langle x_1 - y_1, x_1^* - y_1^* \rangle
  + \langle x_2 - y_2, x_2^* - y_2^*\rangle \geq 0,
  \end{equation}
	and at least one of $(x_1,x_1^*) \notin \gra T_1$
	or $(x_2,x_2^*) \notin \gra T_2$.
	Suppose without loss of generality that
	$(x_1,x_1^*) \notin \gra T_1$. \\
	By the maximality of $T_1$, $\langle x_1 - z_1, x_1^* - z_1^* \rangle < 0$ for some
	$(z_1,z_1^*) \in \gra T_1$, 
  and so by setting $(y_1, y^*_1) := (z_1, z^*_1)$ in (\ref{e:productMM}),
  $\langle x_2 - y_2, x^*_2 - y^*_2 \rangle \geq 0$
  for all $(y_2,y^*_2) \in \gra T_2$.
  Since $T_2$ is maximal monotone, it must be that
	$(x_2,x_2^*) \in \gra T_2$.  \\
  Clearly,
	$\left( (z_1,  x_2), (z_1^*,  x_2^*) \right) \in \gra T_1 \times T_2$,
	yet
	\[\langle (x_1, x_2) - (z_1, x_2), 
	(x_1^*, x_2^*)  - (z_1^*, x_2^*) \rangle < 0.\]
  This is a contradiction of $(\ref{e:productMM})$,
	and so $T_1 \times T_2$ is maximal monotone.
\end{proof}

Of course, if an operator $T_1 : X \to 2^X$ fails to satisfy the conditions for any of the classes of monotone operator
here considered, then the space product of that operator with any other operator
$T_2 : Y \to 2^Y$, namely $T_1 \times T_2 : X \times Y \to 2^{X \times Y}$, will also fail the same condition.
Simply consider the set of points $P$ in the graph of $T_1$ which violate a particular condition in $X$, and instead
consider the set of points $\tilde{P} := \{(p,a) \times (p^*,a^*): p \in P\}$ for a fixed arbitrary point $(a,a^*) \in \gra T_2$.
Clearly $\tilde{P} \subset \gra T_1 \times T_2$, and this set will violate the same conditions in $X \times Y$
that $P$ violates for $T_1$ in $X$.  For instance,
\[ \langle (w, a) - (x, a), (y^*,a^*) - (z^*,a^*) \rangle
= \langle w - x, y^* - z^* \rangle. \]
In this manner, the lack of a monotone class property
(be it $n$-cyclic, para-, maximal, $3^*$-, nor strict monotonicity) is dominant in the product space.

  Taken together, the results of this section are that the product operator $T_1 \times T_2$ of monotone operators $T_1$ and $T_2$ operates with respect to monotone class inclusion as a logical AND operator applied to the monotone classes of $T_1$ and $T_2$.
  For instance, suppose that $T_1$ is paramonotone, not strictly monotone, $3$-cyclic monotone, maximal monotone, and $3^*$-monotone (with binary representation $10111$),
  and suppose that $T_2$ is paramonotone, strictly monotone, not $3$-cyclic monotone, maximal monotone, and not $3^*$-monotone (with binary representation $11010$).
  Then, $T_1 \times T_2$ is paramonotone, not strictly monotone, not $3$-cyclic monotone, maximal monotone, and not $3^*$-monotone (with binary representation $10010$).

\section{Linear Relations} \label{s:LR}
Using the nomenclature of R. Cross \cite{RCross},
we define linear relations, which are set-valued generalizations of linear operators.

\begin{definition}[linear relation]
  An operator $A:X \to 2^X$ is a \emph{linear relation} if 
  $\dom A$ is a linear subspace of $X$ and 
  for all $x,y \in \dom A$, $\lambda \in \RR$
  \begin{enumerate}
  \item $\lambda Ax \subset A(\lambda x)$,
  \item $Ax + Ay \subset A(x+y)$.
  \end{enumerate}
\end{definition}

Equivalently, linear relations are exactly those operators $T : X \to 2^X$ whose graphs are linear subspaces of $X \times X$.  The following results on linear relations are well known.

\begin{fact}
  \cite{Yagi91}
  \label{f:lineartraits}
  For any linear relation
  $A:X \to 2^X$,
  \begin{enumerate}
  \item $\lambda Ax = A(\lambda x)$ for all $x \in \dom A$, $0 \neq \lambda \in \RR$,
    \label{lineartraits1}
  \item $Ax + Ay = A(x+y)$ for all $x,y \in \dom A$,
    \label{lineartraits2}
  \item $A0$ is a linear subspace of $X$,
    \label{lineartraits3}
  \item $Ax = x^* + A0$ for all $(x,x^*) \in \gra A$,
    \label{lineartraits4}
  \item If $A$ is single valued at any point, it is single valued at every point in its domain.
    \label{lineartraits5}
  \end{enumerate}
\end{fact}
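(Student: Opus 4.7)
The overarching idea is that the two inclusions in the definition are precisely what is needed to make $\gra A$ a linear subspace of $X \times X$, and once that reformulation is in hand each of the five items becomes a one-line manipulation. My plan is therefore to first prove $\gra A$ is a linear subspace, and then derive (i)--(v) from that single fact.

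To establish that $\gra A$ is a subspace I would check closure under addition (immediate from $Ax + Ay \subset A(x+y)$) and closure under scalar multiplication (immediate from $\lambda Ax \subset A(\lambda x)$). The only nonobvious point is that $(0,0) \in \gra A$: since $\dom A$ is a subspace, $0 \in \dom A$, so some $a \in A0$ exists; taking $\lambda = -1$ in the scalar inclusion yields $-a \in A0$, and then $0 = a + (-a) \in A0 + A0 \subset A(0+0) = A0$.

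Once $\gra A$ is known to be a linear subspace of $X \times X$, I would derive the five items in the order (i), (iii), (iv), (ii), (v). For (i), observe that $(x, x^*) \in \gra A$ iff $(\lambda x, \lambda x^*) \in \gra A$ whenever $\lambda \neq 0$, which yields both inclusions simultaneously. Item (iii) is immediate, since $A0$ is the second-coordinate projection of the subspace $\gra A \cap (\{0\} \times X)$. For (iv), given $(x, x^*) \in \gra A$, a point $y^*$ lies in $Ax$ iff $(x, y^*) - (x, x^*) = (0, y^* - x^*) \in \gra A$, i.e.\ iff $y^* - x^* \in A0$; hence $Ax = x^* + A0$. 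Item (ii) then follows from (iv): the forward inclusion is given, and for the reverse I would fix any $x^* \in Ax$ and $y^* \in Ay$, note that $x^* + y^* \in A(x+y)$, and apply (iv) at $(x+y, x^* + y^*)$ to get $A(x+y) = (x^* + y^*) + A0 \subset Ax + Ay$. Finally, (v) reduces to (iv): if $A$ is single-valued at some $x_0$ with $Ax_0 = \{x_0^*\}$, then $\{x_0^*\} = x_0^* + A0$ forces $A0 = \{0\}$, and (iv) applied at any other $y \in \dom A$ then makes $Ay = y^* + \{0\} = \{y^*\}$ a singleton.

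The main obstacle is purely organizational: arranging the logical order so that each item is in place before it is invoked, and being careful to handle the $\lambda = 0$ case when showing $(0,0) \in \gra A$. No deeper insight beyond the graph-as-subspace reformulation is required.
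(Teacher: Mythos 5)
Your proof is correct: the paper states this Fact without proof, citing it as well known from the literature (Yagi/Cross), so there is no internal argument to compare against. Your route---first showing $\gra A$ is a linear subspace of $X \times X$ (with the careful verification that $(0,0) \in \gra A$) and then reading off (i)--(v), with (iv) doing the real work for (ii) and (v)---is exactly the standard argument, and it rests on the graph-as-subspace equivalence that the paper itself records immediately after the definition of a linear relation.
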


\begin{proposition}
  \label{p:lineartraitA0perp}
  Suppose $A : X \to 2^X$ is a linear relation, and let $x \in \dom A$.
  Then,
  $P_{A0^{\perp}} Ax$ is a singleton and
  \begin{equation}
    Ax \subset P_{A0^{\perp}} Ax + \overline{A0}.
    \label{e:lineartraitA0perpsubset}
  \end{equation}
  If $A0$ is closed, 
  then
  there is a unique
  $x_0^* \in Ax$ such that
  $x_0^* \in A0^{\perp}$, where
  $x_0^* = P_{A0^{\perp}} x^*$ for all $x^* \in Ax$.
\end{proposition}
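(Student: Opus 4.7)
The plan is to reduce everything to the decomposition $Ax = x^* + A0$ guaranteed by Fact~\ref{f:lineartraits}(\ref{lineartraits4}), and then exploit the orthogonal decomposition of $X$ associated with the closed subspace $A0^{\perp}$. The key observation is that $A0^{\perp}$ is always closed (even when $A0$ is not), so $P_{A0^{\perp}}$ is a well-defined bounded linear projection, and its kernel is $(A0^{\perp})^{\perp} = \overline{A0}$.

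First I would fix $x^* \in Ax$ and write $Ax = x^* + A0$. For any $y \in A0 \subset \overline{A0} = \ker P_{A0^{\perp}}$, linearity of $P_{A0^{\perp}}$ gives $P_{A0^{\perp}}(x^* + y) = P_{A0^{\perp}} x^*$. Hence $P_{A0^{\perp}} Ax = \{P_{A0^{\perp}} x^*\}$, a singleton.

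Next, for the inclusion (\ref{e:lineartraitA0perpsubset}), I would use the orthogonal decomposition $x^* = P_{A0^{\perp}} x^* + P_{\overline{A0}} x^*$ to write, for any $z = x^* + y \in Ax$ with $y \in A0$,
\[
z = P_{A0^{\perp}} x^* + \bigl(P_{\overline{A0}} x^* + y\bigr) \in P_{A0^{\perp}} x^* + \overline{A0},
\]
and conclude $Ax \subset P_{A0^{\perp}} Ax + \overline{A0}$ using the singleton result.

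Finally, for the closed case, $A0$ closed yields the direct-sum decomposition $X = A0 \oplus A0^{\perp}$, so $P_{A0^{\perp}} x^* = x^* - P_{A0} x^* \in x^* + A0 = Ax$, giving an element $x_0^* := P_{A0^{\perp}} x^* \in Ax \cap A0^{\perp}$. Uniqueness follows because if $y_0^* \in Ax \cap A0^{\perp}$, then $y_0^* - x_0^* \in (Ax - Ax) \cap A0^{\perp} \subset A0 \cap A0^{\perp} = \{0\}$ by Fact~\ref{f:lineartraits}(\ref{lineartraits4}); and the equality $x_0^* = P_{A0^{\perp}} x^*$ for every $x^* \in Ax$ is immediate from the singleton statement. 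The only conceptual care needed is to keep $A0$ and $\overline{A0}$ straight — there is no serious obstacle, since the double-perp identity $(A0^{\perp})^{\perp} = \overline{A0}$ handles the non-closed case uniformly; the ``closed'' hypothesis is used only to upgrade the inclusion to the existence of $x_0^* \in Ax$ itself.
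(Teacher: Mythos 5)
Your proof is correct and follows essentially the same route as the paper: both rest on the representation $Ax = x^* + A0$ from Fact~\ref{f:lineartraits}\,\ref{lineartraits4} together with the orthogonal decomposition $X = \overline{A0} \oplus A0^{\perp}$, with the closedness of $A0$ used only to place $P_{A0^{\perp}}x^*$ back inside $Ax$. Your version merely spells out details the paper leaves implicit (the kernel identification $\ker P_{A0^{\perp}} = \overline{A0}$ and the uniqueness argument via $A0 \cap A0^{\perp} = \{0\}$), which is fine.
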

\begin{proof}
  Let $x \in \dom A$.
  Since $\overline{A0}$ and $A0^{\perp}$ are closed subspaces
  such that $\overline{A0} + A0^{\perp} = X$, then for all $x^* \in X$,
  $x^* = P_{\overline{A0}} x^* + P_{A0^{\perp}} x^*$.
  By Fact~\ref{f:lineartraits} \ref{lineartraits4},
  (\ref{e:lineartraitA0perpsubset}) holds and
  $P_{A0^{\perp}} Ax$ is a singleton.
  If $A0$ is closed,
  then for all $x^* \in Ax$,
  \[
  Ax = x^* + A0 = P_{A0^{\perp}}x^* + A0.
  \]
  Therefore,
  $ P_{A0^{\perp}}y^* = P_{A0^{\perp}}x^*$
  for all $y^* \in Ax$.
  Furthermore, 
  since $0 \in A0$ always,
  $P_{A0^{\perp}}x^* \in Ax$. \\
\end{proof}

\begin{proposition}
    \label{p:linearMM}
    Any monotone linear relation $A: X \to 2^X$
    with full domain is maximal monotone
    and single valued.  
\end{proposition}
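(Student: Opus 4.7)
The plan is to split the statement into two parts: single-valuedness first, then maximality. For single-valuedness, I would take an arbitrary $u \in A0$ and show $u=0$. By Fact~\ref{f:lineartraits}\ref{lineartraits3}, $A0$ is a linear subspace, so $\lambda u \in A0$ for every $\lambda \in \RR$, i.e.\ $(0,\lambda u) \in \gra A$. Monotonicity applied to $(0,\lambda u)$ and any $(x,x^*)\in\gra A$ gives
\begin{equation*}
\langle x - 0, x^* - \lambda u\rangle = \langle x,x^*\rangle - \lambda \langle x,u\rangle \geq 0.
\end{equation*}
Letting $\lambda\to\pm\infty$ forces $\langle x,u\rangle = 0$. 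Since $\dom A = X$, $x$ ranges over all of $X$, so $u=0$. Hence $A0=\{0\}$, and Fact~\ref{f:lineartraits}\ref{lineartraits4} yields $Ax = x^*+A0 = \{x^*\}$, so $A$ is single-valued.

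Knowing $A$ is single-valued with full domain, by Fact~\ref{f:lineartraits}\ref{lineartraits1}--\ref{lineartraits2} it is a genuine (possibly unbounded) linear operator. For maximality I argue by contradiction: suppose some $(a,a^*)\notin\gra A$ satisfies $\langle x-a,Ax-a^*\rangle\geq 0$ for all $x\in X$. Since $a\in\dom A$, the point $Aa$ exists, and $(a,a^*)\notin\gra A$ forces $v:=Aa-a^*\neq 0$. Substituting $x = a+tv$ for $t\in\RR$ and using linearity of $A$ gives $Ax-a^* = Aa + tAv - a^* = v + tAv$, so
\begin{equation*}
\langle x-a, Ax - a^*\rangle = \langle tv, v + tAv\rangle = t\|v\|^2 + t^2\langle v, Av\rangle.
\end{equation*}
This is a polynomial in $t$ whose linear term has nonzero coefficient $\|v\|^2 > 0$; for sufficiently small $t<0$ it is strictly negative, contradicting the assumed monotone inequality. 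Therefore no such $(a,a^*)$ exists and $A$ is maximal monotone.

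The only subtle step is the first one: one has to exploit both that $A0$ is a \emph{subspace} (so $\lambda u$ is available for all real $\lambda$, not just $\lambda\geq 0$) and that $\dom A$ is all of $X$ (so that the vanishing of $\langle x,u\rangle$ on $\dom A$ already forces $u=0$). Once single-valuedness is in hand, the maximality argument is a routine one-dimensional perturbation along the direction of the alleged defect $v$, and linearity of $A$ makes the relevant quantity an explicit quadratic in $t$ that is easily driven negative.
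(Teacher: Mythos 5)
Your proof is correct, but it is organized differently from the paper's. The paper proves both conclusions with a single argument: it takes an arbitrary point $(z,z^*)$ monotonically related to $\gra A$, picks any $z_0^*\in Az$ (available because $\dom A=X$), tests monotonicity against $y=z-\varepsilon x$, $y^*=z_0^*-\varepsilon x^*$ for arbitrary $(x,x^*)\in\gra A$, divides by $\varepsilon$ and sends $\varepsilon\to 0^+$ to obtain $\langle x, z^*-z_0^*\rangle\geq 0$ for all $x\in X$, whence $z^*=z_0^*$; this one identity yields maximality (any monotonically related $z^*$ already lies in $Az$) and single-valuedness (any two elements of $Az$ coincide) simultaneously, and it never needs $A$ to be single-valued along the way. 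You instead decompose: first $A0=\{0\}$ via the scaling $\lambda\to\pm\infty$ (which is in effect the full-domain case of Proposition~\ref{p:LRdomain0}, $A0\subset(\dom A)^{\perp}$, so you could have cited that and shortened the step), then maximality by a one-dimensional perturbation $x=a+tv$ along the defect direction $v=Aa-a^*$, where the explicit quadratic $t\|v\|^2+t^2\langle v,Av\rangle$ is driven negative for small $t<0$ (correct whether or not $\langle v,Av\rangle=0$). Your route makes the two conclusions and their mechanisms transparent, and the quadratic computation is very concrete; its mild cost is that the maximality step leans on single-valuedness already being in hand so that $Aa$ and $Av$ are points, whereas the paper's single limiting argument is more economical and works verbatim while $A$ is still potentially multivalued.
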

\begin{proof}
  Suppose that $A :X \to 2^X$ is a linear relation where $\dom A = X$.
  Let $(z, z^*)$ be a point such that 
  $\langle z - y, z^* - y^* \rangle \geq 0$ for all $(y,y^*) \in \gra A$.
  Choose an arbitrary $z_0^* \in Az$.
  Let $y = z - \varepsilon x$ for arbitrary $(x,x^*) \in \gra A$ and $\varepsilon > 0$,
  so that by linearity $-\varepsilon x^* \in A(-\varepsilon x)$.
  Therefore $z_0^* - \varepsilon x^* \in Ay$ and so
  $\langle \varepsilon x, z^* - z_0^* + \varepsilon x^* \rangle \geq 0$.
  Divide out the $\varepsilon$,
  and send $\varepsilon \to  0^+$
  so that 
  $\langle x, z^* - z_0^* \rangle \geq 0$ for all $x \in X$.
  Hence $z^* = z_0^*$ and $T$ is single valued and maximal monotone.
\end{proof}

\begin{proposition} \cite{Bauschke55}
  \label{p:LRdomain0}
  If $A : X \to 2^X$ is a monotone linear relation, 
  then $\dom A \subset (A0)^{\perp}$
  and $A0 \subset (\dom A)^{\perp}$.
\end{proposition}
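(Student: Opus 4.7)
The plan is to prove both inclusions simultaneously by showing that for every $x \in \dom A$ and every $y^* \in A0$, we have $\langle x, y^* \rangle = 0$; that single statement immediately gives $\dom A \subset (A0)^\perp$ and $A0 \subset (\dom A)^\perp$.

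To establish this orthogonality, I would fix $x \in \dom A$, pick any $x^* \in Ax$, and fix $y^* \in A0$. The key point is that $A0$ is a linear subspace of $X$ (Fact~\ref{f:lineartraits}\ref{lineartraits3}), so $\lambda y^* \in A0$ for every $\lambda \in \RR$. By Fact~\ref{f:lineartraits}\ref{lineartraits4}, $Ax = x^* + A0$, hence $x^* + \lambda y^* \in Ax$ for every $\lambda \in \RR$. Also, since $0 \in A0$, the pair $(0,0)$ lies in $\gra A$.

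Now I would apply the monotonicity of $A$ to the pairs $(x, x^* + \lambda y^*)$ and $(0,0)$, obtaining
\begin{equation*}
  \langle x, x^* + \lambda y^* \rangle
  = \langle x - 0, (x^* + \lambda y^*) - 0 \rangle \geq 0,
\end{equation*}
that is, $\langle x, x^* \rangle + \lambda \langle x, y^* \rangle \geq 0$ for all $\lambda \in \RR$. An affine function of $\lambda$ that is bounded below must have zero slope, so $\langle x, y^* \rangle = 0$. Since $x \in \dom A$ and $y^* \in A0$ were arbitrary, both inclusions follow at once.

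There is no real obstacle here; the only subtlety is noticing that the linear-subspace structure of $A0$ lets $\lambda$ range over all of $\RR$ (not just $\RP$), which is precisely what upgrades the monotonicity inequality into the orthogonality equality.
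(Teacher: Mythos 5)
Your proof is correct. Note that the paper states this proposition as a cited result from the reference and gives no proof of its own, so there is nothing internal to compare against; your argument is the standard one. Fixing $x \in \dom A$, $x^* \in Ax$, $y^* \in A0$, you use that $A0$ is a subspace and $Ax = x^* + A0$ (Fact~\ref{f:lineartraits}) to get $(x, x^* + \lambda y^*) \in \gra A$ for all $\lambda \in \RR$, then test monotonicity against $(0,0) \in \gra A$ and let the unbounded $\lambda$-term force $\langle x, y^* \rangle = 0$; this single orthogonality statement indeed yields both inclusions. A marginally leaner variant of the same idea pairs $(x,x^*)$ directly with $(0, \lambda y^*) \in \gra A$, which needs only that $A0$ is a subspace and dispenses with the identity $Ax = x^* + A0$, but the essential mechanism --- letting $\lambda$ range over all of $\RR$ to upgrade the monotonicity inequality to an equality --- is identical, and your version is fully rigorous as written.
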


\begin{corollary} \cite{Bauschke55}
  \label{c:LRMMb55}
  If a linear relation $A : X \to 2^X$ is maximal monotone, then
  $(\dom A)^{\perp} = A0$, and so 
  $\overline{\dom A} = (A0)^{\perp}$ and $A0$ is a closed subspace.
\end{corollary}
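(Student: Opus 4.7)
The plan is to prove the nontrivial inclusion $(\dom A)^{\perp} \subset A0$, and then to derive the other two conclusions as immediate consequences via basic properties of orthogonal complements.

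First I would pick an arbitrary $z^* \in (\dom A)^{\perp}$ and aim to show $z^* \in A0$. Since $A$ is maximal monotone, it suffices to verify that the candidate pair $(0, z^*)$ is monotonically related to every $(y, y^*) \in \gra A$, i.e., that $\langle 0 - y, z^* - y^* \rangle \geq 0$. Using $z^* \perp \dom A$ and $y \in \dom A$, this expression simplifies to $\langle y, y^* \rangle$. So the question reduces to showing $\langle y, y^* \rangle \geq 0$ for every $(y, y^*) \in \gra A$.

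To establish this inequality, I would use Proposition~\ref{p:LRdomain0}: picking any $0^* \in A0$ we have $0^* \in (\dom A)^{\perp}$, so $\langle y, 0^* \rangle = 0$. Monotonicity applied to $(y, y^*)$ and $(0, 0^*)$ then gives $\langle y - 0, y^* - 0^* \rangle = \langle y, y^* \rangle \geq 0$, which is exactly what is needed. Combining with the inclusion $A0 \subset (\dom A)^{\perp}$ from Proposition~\ref{p:LRdomain0}, we conclude $(\dom A)^{\perp} = A0$.

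For the remaining two claims: since $(\dom A)^{\perp}$ is closed as an orthogonal complement (noted in the preliminaries of the paper), the equality $A0 = (\dom A)^{\perp}$ immediately forces $A0$ to be a closed subspace. Finally, since $\dom A$ is a linear subspace, $\overline{\dom A} = (\dom A)^{\perp\perp} = (A0)^{\perp}$. I expect no serious obstacle; the only subtle point is spotting that the test pair to feed into maximal monotonicity should be $(0, z^*)$, which makes everything collapse because of the orthogonality between $\dom A$ and $A0$ already recorded in Proposition~\ref{p:LRdomain0}.
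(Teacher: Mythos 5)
Your proof is correct and complete. The paper does not actually prove this corollary (it is quoted from \cite{Bauschke55} without argument), but your route --- testing the pair $(0,z^*)$ for $z^*\in(\dom A)^{\perp}$ against maximal monotonicity, reducing the monotone relation to $\langle y,y^*\rangle\ge 0$ via Proposition~\ref{p:LRdomain0}, and then obtaining closedness of $A0$ and $\overline{\dom A}=(\dom A)^{\perp\perp}=(A0)^{\perp}$ from standard facts about orthogonal complements --- is exactly the standard argument and fills the gap correctly; the only simplification available is that $\langle y,y^*\rangle\ge 0$ follows even more directly by applying monotonicity to $(y,y^*)$ and $(0,0)$, since $0\in A0$ because $A0$ is a linear subspace.
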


This leads to a partial converse result to Proposition~\ref{p:linearMM}.

\begin{corollary} \label{c:LRMMfulldomain}
  If a maximal monotone single-valued linear relation $A: X \to X$
  is locally bounded, then it has full domain.
\end{corollary}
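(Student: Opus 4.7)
The plan is to first show that the domain of $A$ is dense in $X$, then use local boundedness to produce a continuous linear extension, and finally invoke maximality to conclude the extension equals $A$ itself.

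First I would observe that since $A$ is single-valued, $A0$ consists of a single point; since it is also a linear subspace of $X$ by Fact~\ref{f:lineartraits}\ref{lineartraits3}, it must be $\{0\}$. Applying Corollary~\ref{c:LRMMb55}, which gives $(\dom A)^{\perp} = A0$, yields $(\dom A)^{\perp} = \{0\}$, hence $\overline{\dom A} = X$. So $A$ is a densely defined single-valued linear operator on $X$.

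Next I would translate local boundedness into continuity. Because $A$ is linear and bounded on a neighborhood of some point in $\dom A$, by translation and scaling it is bounded on a neighborhood of $0$ in $\dom A$, and therefore continuous as a linear operator on the dense subspace $\dom A$. The bounded linear transformation theorem then produces a unique continuous linear extension $\tilde{A} : X \to X$ with $\gra A \subset \gra \tilde{A}$.

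The main step, and the only place real care is needed, is to check that $\tilde{A}$ is monotone. This follows because monotonicity is a closed condition: for any $x, y \in X$, picking sequences $x_n, y_n \in \dom A$ with $x_n \to x$ and $y_n \to y$ and passing to the limit in $\langle x_n - y_n, Ax_n - Ay_n \rangle \geq 0$ (using continuity of $\tilde{A}$) gives $\langle x - y, \tilde{A}x - \tilde{A}y \rangle \geq 0$. Thus $\tilde{A}$ is a monotone extension of $A$. Since $A$ is maximal monotone, the inclusion $\gra A \subset \gra \tilde{A}$ must be an equality, forcing $\dom A = \dom \tilde{A} = X$.
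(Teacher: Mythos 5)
Your proof is correct, but it follows a different route from the paper's. You and the paper share the first step (single-valuedness gives $A0=\{0\}$, so Corollary~\ref{c:LRMMb55} yields $\overline{\dom A}=X$), but after that you exploit linearity: local boundedness at a point of the subspace $\dom A$ gives, by translation and scaling, a uniform operator-norm bound on $\dom A$, hence continuity, and the bounded-linear-transformation theorem supplies a continuous linear extension $\tilde{A}$ to all of $X$; monotonicity passes to the limit by strong continuity, and maximality forces $\gra\tilde{A}=\gra A$, so $\dom A=X$. The paper never constructs an extension operator: it fixes an arbitrary $x\in X$, approximates it by $y_n\in\dom A$, uses local boundedness to extract a \emph{weakly} convergent subsequence of $(Ay_n)$ with weak limit $x^*$, passes to the limit in $\langle y_n-z, Ay_n-z^*\rangle\geq 0$ for every $(z,z^*)\in\gra A$, and invokes maximality to conclude $(x,x^*)\in\gra A$ directly. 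Your argument buys a stronger-looking conclusion along the way ($A$ is in fact a bounded linear operator on $X$) and avoids weak compactness, at the cost of relying on linearity and completeness (BLT); the paper's argument is the standard weak-compactness pattern that would survive essentially unchanged for nonlinear maximal monotone operators that are locally bounded near a closure point of their domain. One small point to keep in mind: you use local boundedness only at a single point of $\dom A$, while the paper uses it at arbitrary $x\in X$; for single-valued linear operators these are equivalent (boundedness near one domain point already yields the operator-norm bound), so there is no gap, but it is worth saying explicitly since the limit point $x$ in the paper's argument need not lie in $\dom A$.
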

\begin{proof}
  Since $A$ is single valued, $A0 = 0$, and so by Corollary~\ref{c:LRMMb55},
  $\overline{\dom A} = (A0)^{\perp} = X$.
  Choose any point $x \in X$.  Since $\dom A$ is dense in $X$,
  there exist a sequence $(y_n,y^*_n)_{\nnn} \subset \gra A$ 
  such that $y_n \to x$.  Since $A$ is locally bounded, a subsequence
  $(y_{\phi(n)}^*)_{\nnn}$ of $(y_n^*)_{\nnn}$ weakly converges to some point $x^* \in X$.
  Therefore, for all $(z,z^*) \in \gra A$,
  \[
  0 \leq \lim_{n \to +\infty} \langle y_{\phi(n)} - z, y_{\phi(n)}^* - z^* \rangle
  = \langle x - z, x^* - z^* \rangle.
  \]
  Since $A$ is maximal monotone, $(x,x^*) \in \gra A$, and so
  $A$ has full domain.
\end{proof}

The following fact appears in Proposition~2.2 in \cite{Bauschke55}.

\begin{fact}[\cite{Bauschke55}]
  \label{f:LRsimpleip}
  Let $A : X \to 2^X$ be a monotone linear relation.
  For any $x,y \in \dom A$, the set
  \[ \{ \langle y, x^* \rangle : x^* \in Ax \} \]
  is a singleton, the value of which can be denoted simply by
  $ \langle y, Ax \rangle$.
\end{fact}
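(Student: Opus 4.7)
The plan is to show that the difference of any two elements of $Ax$ lies in $A0$, and then use the orthogonality between $\dom A$ and $A0$ guaranteed by monotonicity of the linear relation. This reduces the statement to a direct combination of two previously stated results.

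More concretely, suppose $x_1^*, x_2^* \in Ax$. First I would invoke Fact~\ref{f:lineartraits}\ref{lineartraits4}, which gives $Ax = x_1^* + A0$, so that $x_2^* - x_1^* \in A0$. Next, since $A$ is a monotone linear relation, Proposition~\ref{p:LRdomain0} yields $\dom A \subset (A0)^\perp$. Because $y \in \dom A$, this means $\langle y, z^*\rangle = 0$ for every $z^* \in A0$; applying this to $z^* = x_2^* - x_1^*$ gives
\[
\langle y, x_2^* \rangle - \langle y, x_1^* \rangle = \langle y, x_2^* - x_1^* \rangle = 0,
\]
so $\langle y, x_1^* \rangle = \langle y, x_2^* \rangle$, establishing that the set $\{\langle y, x^* \rangle : x^* \in Ax\}$ is a singleton.

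There is no genuine obstacle here: the argument is a two-line consequence of the parametrization $Ax = x^* + A0$ together with the basic orthogonality $\dom A \perp A0$. The only point that requires a moment of thought is making sure that $y$ being merely in $\dom A$ (rather than, say, in $\overline{\dom A}$) is enough, which it is, since $(A0)^\perp$ is a closed subspace containing $\dom A$ and we only need a single inner-product equation to vanish.
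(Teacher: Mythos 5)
Your proof is correct and follows essentially the same route as the paper: write $x_2^* - x_1^* \in A0$ via Fact~\ref{f:lineartraits}\ref{lineartraits4} and then kill $\langle y, x_2^* - x_1^* \rangle$ using the orthogonality from Proposition~\ref{p:LRdomain0}. The only cosmetic difference is that you invoke the inclusion $\dom A \subset (A0)^{\perp}$ while the paper uses the companion inclusion $A0 \subset (\dom A)^{\perp}$ from the same proposition; the argument is otherwise identical.
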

\begin{proof}
  Let $x,y \in \dom A$ and
  suppose that $x_1^*, x_2^* \in Ax$.
  By Fact~\ref{f:lineartraits} \ref{lineartraits4},
  $x_2^* - x_1^* \in A0$.
  Now, by Proposition~\ref{p:LRdomain0},
  $A0 \subset (\dom A)^{\perp}$,
  and so $x_2^* - x_1^* \in (\dom A)^{\perp}$.
  Since $y \in \dom A$,
  $\langle y, x_1^* \rangle = \langle y, x_2^* \rangle$.
\end{proof}

Proposition~\ref{p:LRdimreduction} below demonstrates that multi-valued linear relations are
closely related to a number of single-valued linear relations.  Note especially that
$V = A0^{\perp}$ and $V = \overline{dom A}$ both satisfy the conditions below.

\begin{proposition}[dimension reduction]
  \label{p:LRdimreduction}
  Suppose that $A : X \to 2^X$ is a monotone linear relation.
  Let $V \subset X$ satisfy
  \begin{enumerate}
    \item $V$ is a closed subspace of $X$,
    \item $\dom A \subset V$, and
    \item $A0 \subset V^\perp$.
  \end{enumerate}
  Then the operator 
  $\tilde{A} : V \to 2^V$,
  defined by $\tilde{A}x := P_{V} Ax$ on $\dom A$,
  where $\dom A = \dom \tilde{A}$,
  is a single-valued monotone linear relation.
  In the case where $V = A0^{\perp}$ and $A0$ is closed, the operator $\tilde{A}$
  is a single-valued selection of $A$.
  If $A$ is maximal monotone, then $V = A0^{\perp} = \overline{\dom A}$ is the only subspace satisfying conditions (i)--(iii) above, and $\tilde{A}$ is a maximal monotone single-valued selection of $A$.
\end{proposition}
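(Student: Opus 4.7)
The plan is to verify, in order, single-valuedness, the linear-relation property, monotonicity, the selection property when $V = A0^\perp$ with $A0$ closed, the uniqueness of $V$ in the maximal monotone case, and finally the maximality of $\tilde A$ on $V$.

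For single-valuedness, I would take $x_1^*, x_2^* \in Ax$; Fact~\ref{f:lineartraits}\ref{lineartraits4} gives $x_2^* - x_1^* \in A0 \subset V^\perp$, so $P_V x_1^* = P_V x_2^*$ and $\tilde A x$ is a singleton. The linear-relation property is then immediate because $\gra \tilde A$ is the image of the subspace $\gra A$ under the continuous linear map $(x, x^*) \mapsto (x, P_V x^*)$, and the inclusion $\dom A \subset V$ places this image inside $V \times V$. For monotonicity, given $x, y \in \dom A$ with $x^* \in Ax$ and $y^* \in Ay$, I decompose $x^* - y^* = P_V(x^* - y^*) + P_{V^\perp}(x^* - y^*)$; pairing with $x - y \in V$ kills the second summand, so $\langle x-y, \tilde A x - \tilde A y\rangle = \langle x-y, x^* - y^*\rangle \geq 0$. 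For the selection claim, Proposition~\ref{p:lineartraitA0perp} supplies a unique $x_0^* \in Ax$ lying in $A0^\perp$, namely $x_0^* = P_{A0^\perp} x^*$ for every $x^* \in Ax$; this $x_0^*$ is exactly $\tilde A x$ when $V = A0^\perp$, so $\gra \tilde A \subset \gra A$.

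Now assume $A$ is maximal monotone. Corollary~\ref{c:LRMMb55} gives $A0$ closed and $A0^\perp = \overline{\dom A}$. Conditions (i) and (ii) force $A0^\perp = \overline{\dom A} \subset V$, while (iii), together with $V = V^{\perp\perp}$, yields $V \subset A0^\perp$; hence $V = A0^\perp$, which proves uniqueness.

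The main obstacle is establishing that $\tilde A$ is maximal monotone on $V$. I plan to reduce this to the maximality of $A$ in $X$: suppose $(v, v^*) \in V \times V$ satisfies $\langle v - x, v^* - \tilde A x\rangle \geq 0$ for every $x \in \dom \tilde A$. Since $\tilde A$ is a selection of $A$, each $x^* \in Ax$ differs from $\tilde A x$ by an element of $A0 = V^\perp$, and since $v - x \in V$ this difference is killed in the inner product, giving $\langle v - x, v^* - x^* \rangle = \langle v - x, v^* - \tilde A x\rangle \geq 0$ for every $(x, x^*) \in \gra A$. Maximality of $A$ then forces $(v, v^*) \in \gra A$, so $v \in \dom A = \dom \tilde A$; since $v^* \in V$, Proposition~\ref{p:lineartraitA0perp} identifies $\tilde A v = P_V v^*$ with $v^*$, and $(v, v^*) \in \gra \tilde A$ as required. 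Everything else is a direct application of the orthogonal decomposition $X = V \oplus V^\perp$ and of the earlier results on monotone linear relations.
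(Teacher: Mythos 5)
Your proof is correct and follows essentially the same route as the paper: orthogonal decomposition along $V$ and $V^\perp$, Fact~\ref{f:lineartraits}\ref{lineartraits4} and Proposition~\ref{p:lineartraitA0perp} for single-valuedness and the selection claim, linearity of $P_V$ for the linear-relation property, Corollary~\ref{c:LRMMb55} for the identification $V = A0^\perp = \overline{\dom A}$, and the transfer of maximality from $A$ to $\tilde A$ by showing a point monotonically related to $\gra\tilde A$ is monotonically related to $\gra A$. You even spell out a couple of steps the paper leaves implicit (the monotonicity of $\tilde A$ and the uniqueness argument for $V$), so nothing is missing.
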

\begin{proof}
  For any $x \in X$,
  $P_V(x) = P_V(P_{A0^{\perp}} x + P_{\overline{A0}} x) = P_V(P_{A0^{\perp}} x)$
  as $\overline{A0} \subset V^{\perp}$.
  By Proposition~\ref{p:lineartraitA0perp},
  $\tilde{A}$ is always single-valued,
  and if $A0$ is closed,
  $P_{A0^{\perp}} x^* \in Ax$
  for each $(x,x^*) \in \gra A$,
  and so if $V = A0^{\perp}$, 
  then $\tilde{A}$ is a selection of $A$.
  Consider now arbitrary $(y,\tilde{y}^*), (z,\tilde{z}^*) \in \gra \tilde{A}$,
  and $\lambda \in \RR$.
  Then, for $y^* \in Ay$ and $z^* \in Az$,
  we have that $P_V y^* = \tilde{y}^*$ and $P_V z^* = \tilde{z}^*$.
  Since $A$ is a linear relation,
  $(y+\lambda z, y^* + \lambda z^*) \in \gra A$.
  Therefore,
  $(y + \lambda z, P_V (y^* + \lambda z^*) ) \in \gra \tilde{A}$,
  and since $P_V$ is itself a linear operator,
  $P_V (y^* + \lambda z^*) = \tilde{y}^* + \lambda \tilde{z}^*$,
  it follows that 
  $\tilde{y}^* + \lambda \tilde{z}^* \in \tilde{A}(y + \lambda z)$
  Since $\dom A = \dom \tilde{A}$,
  the operator $\tilde{A}$ is a linear relation.
  Finally, suppose that $A$ is maximal monotone,
  and so from Corollary~\ref{c:LRMMb55} we have that
  $A0^{\perp} = \overline{\dom A}$ and $A0$ is closed.
  The only subspace $V$ satisfying the
  conditions in this case is $V = A0^{\perp}$.
  Suppose there exists a point $(x,x^*)$
  where $x \in V = A0^{\perp}$,
  that is monotonically related to $\gra \tilde{A}$.
  For all $(z,z^*) \in \gra A$,
  there is a $y \in A0$ such that 
  $y + P_V z^* = z^*$.
  Then, by Fact~\ref{f:lineartraits} \ref{lineartraits4},
  \[
  0 \leq
  \langle x - z, x^* - z^* \rangle
  = \langle x - z, x^* - y - P_V z^* \rangle 
  = \langle x - z, x^* - P_V z^* \rangle .
  \]
  Therefore, $(x,x^*)$ also extends $A$, and 
  since $A$ is maximal monotone, $(x,x^*) \in \gra A$.
  Since $x^* \in V$, $P_V x^* = x^*$ and so $(x,x^*) \in \gra \tilde{A}$.
  Therefore, $\tilde{A}$ is maximal monotone.
\end{proof}

From the results in this section so far, we know that monotone linear relations $A : X \to 2^X$ 
can only be multivalued such that $A0$ is a subspace of $X$, $Ax = x^* + A0$ for any $x^* \in Ax$,
and $A0 \subset (\dom A)^{\perp}$.  For the purposes of calculation by the inner product,
for any $x,z \in \dom A$,
\begin{equation}
  \langle x, Az  \rangle = \langle x, \tilde{A} z \rangle
\end{equation}
where $\tilde{A}$ is the single-valued operator (a selection of $A$ if $A0$ is closed) as calculated in
Proposition~\ref{p:LRdimreduction} for $V = A0^{\perp}$.
In the other direction, any single-valued monotone linear relation $\tilde{A} : X \to 2^X$
can be extended to a multi-valued monotone linear relation $A : X \to 2^X$ by choosing any subspace 
$V \subset (\dom A)^{\perp}$ and setting $Ax := \tilde{A}x + V$.

Now, in the unbounded linear case, maximal monotone operators may not have a closed domain.  The concept of a \emph{halo} well captures this aspect.

\begin{definition}[halo]
  The \emph{halo} of a monotone linear relation $A : X \to 2^X$ is the set
  \begin{equation}
    \halo A := \left\{ x \in X : (\exists M)(\forall (y,y^*) \in \gra A)
    \langle x - y, y^* \rangle \leq M \|x- y\| \right\}.
    \label{e:halo}
  \end{equation}
\end{definition}

\begin{fact}\cite{Bauschke55}
  If $A : X \to 2^X$ is a monotone linear relation, then 
  $\dom A \subset \halo A \subset (A0)^{\perp}$.
  Furthermore, $A$ is maximal monotone if and only if 
  $A0^{\perp} = \overline{\dom A}$ and $\halo A = \dom A$.
\end{fact}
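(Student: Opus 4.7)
The plan is to first establish the chain $\dom A \subset \halo A \subset (A0)^{\perp}$, and then handle the two directions of the biconditional separately, leaning on Corollary~\ref{c:LRMMb55} and Proposition~\ref{p:LRdimreduction}. For $\dom A \subset \halo A$, I would fix $x \in \dom A$ and any $x^* \in Ax$; monotonicity combined with Cauchy--Schwarz delivers $\langle x - y, y^* \rangle \leq \langle x - y, x^* \rangle \leq \|x^*\|\,\|x - y\|$ for every $(y, y^*) \in \gra A$, so $M := \|x^*\|$ witnesses $x \in \halo A$. For $\halo A \subset (A0)^{\perp}$, I would fix $x \in \halo A$ with constant $M$ and an arbitrary $w \in A0$; Fact~\ref{f:lineartraits} places $(0, \lambda w) \in \gra A$ for every $\lambda \in \RR$, so the halo inequality reads $\lambda\langle x, w\rangle \leq M\|x\|$ for all $\lambda \in \RR$, forcing $\langle x, w\rangle = 0$.

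For the forward direction of the biconditional, Corollary~\ref{c:LRMMb55} immediately supplies $A0^{\perp} = \overline{\dom A}$, so only $\halo A \subset \dom A$ needs work. I would reduce to the single-valued maximal monotone selection $\tilde{A}:V\to V$ on $V := A0^{\perp}$ via Proposition~\ref{p:LRdimreduction}, using that $\halo A \subset V$ by the chain inclusion and that $\dom A$ is dense in $V$. Given $x \in \halo A$, choose $y_n \in \dom A$ with $y_n \to x$, extract a weak subsequential limit $x^*$ of $(\tilde{A}y_n)$, and pass to the limit in the monotone pairing $\langle y_n - z, \tilde{A}y_n - z^*\rangle \geq 0$ against arbitrary $(z, z^*) \in \gra \tilde{A}$ to obtain $\langle x - z, x^* - z^*\rangle \geq 0$; maximality of $\tilde{A}$ then gives $(x, x^*) \in \gra \tilde{A}$, hence $x \in \dom A$.

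For the backward direction, I would take $(x, x^*)$ monotonically related to $\gra A$ and rearrange to $\langle x - y, y^*\rangle \leq \|x^*\|\,\|x - y\|$, placing $x \in \halo A = \dom A$. Picking $x_0^* \in Ax$, I would test monotonicity against $(x + \lambda z, x_0^* + \lambda z^*) \in \gra A$ (valid by linearity) for arbitrary $(z, z^*) \in \gra A$ and $\lambda \in \RR$, producing $-\lambda\langle z, x^* - x_0^*\rangle + \lambda^2\langle z, z^*\rangle \geq 0$; dividing by $\lambda$ and sending $\lambda \to 0^{\pm}$ forces $\langle z, x^* - x_0^*\rangle = 0$ for every $z \in \dom A$. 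Hence $x^* - x_0^* \in (\dom A)^{\perp} = (\overline{\dom A})^{\perp} = (A0^{\perp})^{\perp} = \overline{A0}$, and to conclude $x^* \in x_0^* + A0 = Ax$ I would finish by upgrading $\overline{A0}$ to $A0$, which is where the monotone extension structure of the hypothesis must be squeezed.

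The principal obstacle I anticipate is the uniform norm bound on $(\tilde{A}y_n)$ in the forward direction: the scalar halo estimate $\langle x - y_n, \tilde{A}y_n\rangle \leq M\|x - y_n\|$ controls only one inner product at a time, and converting it into control on $\|\tilde{A}y_n\|$ requires exploiting linearity. The planned trick is to test the halo condition at perturbations $y_n + \varepsilon u$ for unit directions $u \in \dom A$ (so that $\tilde{A}(y_n + \varepsilon u) = \tilde{A}y_n + \varepsilon \tilde{A}u$), subtract the halo bound at $y_n$, divide by $\varepsilon$, and send $\varepsilon \to 0^+$ to extract a directional estimate $|\langle u, \tilde{A}y_n\rangle| \leq M + o(1)$; density of $\dom A$ in $A0^{\perp}$ and $\tilde{A}y_n \in A0^{\perp}$ then upgrade this to the required $\|\tilde{A}y_n\| \leq M + o(1)$. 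This is the linear-relation analogue of the standard local boundedness argument for maximal monotone operators.
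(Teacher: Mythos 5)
The paper itself offers no proof of this Fact (it is quoted from \cite{Bauschke55}), so your argument has to stand on its own; the chain $\dom A \subset \halo A \subset (A0)^{\perp}$ is proved correctly, as are the easy halves of the equivalence ($A0^{\perp}=\overline{\dom A}$ from Corollary~\ref{c:LRMMb55}, and the observation that a monotonically related pair $(x,x^*)$ forces $x\in\halo A$). The first genuine gap is exactly the one you flag in the forward direction, and your proposed fix does not work. You cannot ``subtract the halo bound at $y_n$'' from the halo bound at $y_n+\varepsilon u$: both inequalities have the same sense, so subtraction is not a valid inference. What the expansion actually demands, after dividing by $\varepsilon$ and letting $\varepsilon\to0^+$, is a \emph{lower} bound on $\langle x-y_n,\tilde{A}y_n\rangle$ comparable to $-M\|x-y_n\|$, and the halo condition only ever supplies upper bounds. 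Worse, the estimate you are aiming for, $|\langle u,\tilde{A}y_n\rangle|\leq M+o(1)$ for all unit $u\in\dom A$, would give $\|\tilde{A}y_n\|\leq M+o(1)$ for an \emph{arbitrary} sequence $y_n\to x$ in $\dom A$, i.e.\ local boundedness of $\tilde{A}$ at halo points; this is false precisely in the only case where the forward inclusion has content (dense, non-closed domain). For the maximal self-adjoint extension of Example~\ref{ex:growunboundedLRell2}, any $x$ in its domain lies in $\dom A\subset\halo A$, yet $y_n:=x+\tfrac{1}{n}\mathbf{e}_{n^2}\to x$ has $\|Ay_n\|\geq n-\|Ax\|\to+\infty$; compare Corollary~\ref{c:LRMMfulldomain}, where local boundedness has to be \emph{assumed}. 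So the weak-cluster-point strategy collapses; proving $\halo A\subset\dom A$ needs a different mechanism, e.g.\ constructing directly from the halo bound a vector $x^*$ with $(x,x^*)$ monotonically related to $\gra A$ (a Hahn--Banach / Fitzpatrick-function type argument), rather than approximating $x$ and passing to weak limits of images.

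The second gap is the one you admit at the end of the backward direction, and it cannot be closed from the hypotheses as printed: with only $A0^{\perp}=\overline{\dom A}$ and $\halo A=\dom A$, the ``if'' direction is actually false when $A0$ is not closed. In $\ell^2$ take $\gra A:=\{(t\mathbf{e}_1,\,t\mathbf{e}_1+\mathbf{w}): t\in\RR,\ \mathbf{w}\ \textrm{finitely supported},\ \mathbf{w}\perp\mathbf{e}_1\}$. This is a monotone linear relation with $\dom A=\overline{\dom A}=\RR\mathbf{e}_1=A0^{\perp}$ and $\halo A=\dom A$ (any $x$ with a nonzero component $v\perp\mathbf{e}_1$ violates (\ref{e:halo}) against $y=P_{\RR\mathbf{e}_1}x$ and $\mathbf{w}\in A0$ with $\langle v,\mathbf{w}\rangle$ arbitrarily large), yet $(\mathbf{e}_1,\mathbf{e}_1+\mathbf{u})$ with $\mathbf{u}\in\overline{A0}\setminus A0$ is a proper monotone extension, so $A$ is not maximal. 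The correct hypothesis, consistent with Corollary~\ref{c:LRMMb55}, is $(\dom A)^{\perp}=A0$ (equivalently, add that $A0$ is closed); with it your own computation finishes immediately, since $x^*-x_0^*\in(\dom A)^{\perp}=A0$ gives $x^*\in x_0^*+A0=Ax$. So: your inclusions and the necessity direction (modulo the boundedness gap above) are sound, but the sufficiency direction should be run against the corrected hypothesis, because the ``upgrade of $\overline{A0}$ to $A0$'' you hoped to squeeze out is impossible in general.
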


Now, if the domain of a linear relation is not closed, we have the following curious result.
Below, $A^m$ denotes the iterated operator composition, where for instance $A^3x = A(A(Ax))$.
Note that if $\dom A$ is dense in $X$, the operator $P_V A$ is the same as $A$.

\begin{proposition}
  \label{p:curiousLRresult}
  Suppose a maximal monotone linear relation $A : X \to 2^X$ is such that $\dom A$ is not closed,
  and let $V := \overline{\dom A}$.
  Then, there is a sequence $(z_n)_{\nnn} \subset \dom A$
  such that
  \begin{eqnarray}
    (P_V A)^m(z_n) \in \dom A, & \forall 1 \leq m < n \\
    (P_V A)^n(z_n) \notin \dom A, &
    \label{e:curiousLRresult}
  \end{eqnarray}
  where for all $z \in \dom A$, $P_V Az$ is a singleton set.
\end{proposition}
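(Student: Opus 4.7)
The plan is to work with the single-valued selection $\tilde{A} := P_V A$ on $\dom A$ furnished by Proposition~\ref{p:LRdimreduction}. Since $A$ is maximal monotone, Corollary~\ref{c:LRMMb55} gives $V = A0^{\perp} = \overline{\dom A}$ with $A0$ closed, so Proposition~\ref{p:lineartraitA0perp} provides the singleton statement $P_V A z = \{\tilde{A} z\}$ for $z \in \dom A$, while Proposition~\ref{p:LRdimreduction} identifies $\tilde{A}$ as a single-valued maximal monotone linear operator on the Hilbert space $V$ with $\dom \tilde{A} = \dom A$. Setting $D_k := \dom \tilde{A}^k$ (so that $D_0 = V$, $D_1 = \dom A$, and $D_{k+1} = \{x \in \dom A : \tilde{A}x \in D_k\}$), the task reduces to producing, for each $n \geq 1$, a point $z_n \in D_n \setminus D_{n+1}$; equivalently, to showing that the descending chain $V = D_0 \supset D_1 \supset D_2 \supset \cdots$ is strictly decreasing.

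The central technical step is to prove that the resolvent $J := (I + \tilde{A})^{-1}$, which by Minty's theorem is a well-defined bounded linear map from $V$ onto $\dom A$, raises the order by one: $J(D_k) \subset D_{k+1}$ for every $k \geq 0$. The base case $J(V) = \dom A = D_1$ is immediate. For the inductive step, if $y \in D_k \subset D_{k-1}$, the inductive hypothesis gives $Jy \in D_k$, whence the defining identity $\tilde{A} J y = y - Jy$ together with the subspace structure of $D_k$ yields $\tilde{A} J y \in D_k$, so $Jy \in D_{k+1}$. Combined with the straightforward forward inclusion $(I + \tilde{A})(D_{k+1}) \subset D_k$ and the injectivity of $I + \tilde{A}$ (the strong-monotonicity bound $\langle x, (I + \tilde{A})x \rangle \geq \|x\|^2$), this establishes that $I + \tilde{A}$ is a linear bijection of $D_{k+1}$ onto $D_k$ for every $k \geq 0$.

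The proof concludes by contradiction. Suppose $D_{n+1} = D_n$ for some $n \geq 1$. The bijection $I + \tilde{A} : D_{n+1} \to D_n$ collapses into a bijection $D_n \to D_n$, so $(I + \tilde{A})(D_n) = D_n$; on the other hand, the lemma applied at level $k = n - 1$ gives $(I + \tilde{A})(D_n) = D_{n-1}$. Hence $D_n = D_{n-1}$, and iterating this descent yields $D_1 = D_0 = V$, i.e., $\dom A = V$, contradicting the hypothesis that $\dom A$ is not closed. Therefore $D_{n+1} \subsetneq D_n$ for every $n \geq 0$, and any choice $z_n \in D_n \setminus D_{n+1}$ supplies the required sequence. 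The main obstacle is recognizing that Minty's theorem delivers not merely the single surjection $I + \tilde{A} : \dom A \to V$ but an entire graded family of bijections $(I+\tilde{A}) : D_{k+1} \to D_k$, which is precisely what powers the descent; verifying the compatibility step $J(D_k) \subset D_{k+1}$ is where the linear-relation structure of $\tilde{A}$ is used most essentially.
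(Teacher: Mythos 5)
Your proof is correct, but it takes a genuinely different route from the paper's, even though both are powered by the same engine: Minty's theorem applied to the maximal monotone single-valued selection $\tilde A = P_V A$ of Proposition~\ref{p:LRdimreduction}, together with the resolvent identity $\tilde A (Jy) = y - Jy$ for $J = (I+\tilde A)^{-1}$. The paper argues constructively: it fixes one point $z_0 \in V \setminus \dom A$, produces the whole sequence as a single backward resolvent orbit $z_{n+1} := J z_n$, and verifies the escape-time property by explicitly computing $(P_V A)^m z_{n+1}$ as an alternating binomial combination of $z_{n-m+1},\dots,z_{n+1}$ in which the lowest-index term carries coefficient $1$; membership in the subspace $\dom A$ is then read off from whether $z_0$ appears. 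You instead encode everything in the chain $D_k = \dom \tilde A^k$, prove the graded lemma $J(D_k) \subset D_{k+1}$ by induction (this is the abstract form of the paper's coefficient bookkeeping), upgrade it to bijections $I+\tilde A : D_{k+1} \to D_k$, and obtain strictness of the chain by descent to a contradiction ($D_{n+1}=D_n$ would force $\dom A = \overline{\dom A}$), finally choosing arbitrary witnesses $z_n \in D_n \setminus D_{n+1}$. Your route avoids all coefficient computations and delivers the statement exactly as written; the paper's route buys slightly more, namely one explicitly constructed, linked resolvent orbit that works for all $n$ simultaneously, which the remark following the proposition exploits to generate many such sequences. Two small points you should make explicit: each $D_k$ is a linear subspace (immediate from linearity of $\tilde A$, but it is precisely what justifies $y - Jy \in D_k$ in your inductive step), and the maximal monotonicity of $\tilde A$ as an operator on the Hilbert space $V$ (supplied by Proposition~\ref{p:LRdimreduction}) is what licenses applying Minty's theorem to $\tilde A$ rather than to $A$ itself, which is how the paper proceeds.
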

\begin{proof}
  Since $A$ is maximal monotone, $\dom A = \halo A \subsetneq \overline{\dom A}$,
  and by Corollary~\ref{c:LRMMb55},
  $V = A0^{\perp}$.
  Therefore, by Proposition~\ref{p:lineartraitA0perp},
  $P_V Az \subset Az$ and is a singleton for every $z \in \dom A$.
  Choose any point $z_0 \in V$ such that $z_0 \notin \dom A$. 
  We shall generate the sequence $(z_n)_{\nnn} \subset \dom A$ iteratively as follows.
  For some $n \geq 0$, suppose that $z_n \in V$.
  By Minty's theorem \cite{Minty1962}, 
  since $A$ is maximal monotone,
  $\ran(\Id + A) = X$.
  Therefore, there exists a
  $z_{n+1} \in \dom A$ such that
  $z_n \in z_{n+1} + A z_{n+1}$.
  Since $z_n, z_{n+1} \in V$,
  $z_n \in z_{n+1} + P_V A z_{n+1}$,
  and so as $P_V Az_{n+1}$ is a singleton, 
  \[ P_V A z_{n+1} = \{ z_n - z_{n+1} \} . \]
  Now, since both $P_V$ and $A$ are linear operators, if $n \geq 2$
  \begin{equation}
    \begin{array}{lcl}
   (P_V A)^2 z_{n+1} &=&   P_V A (z_n - z_{n+1})    \\
   &=&  P_V A z_n - P_V A z_{n+1}  \\
   &=&  \{ z_{n-1} - 2z_n + z_{n+1} \},
  \end{array}
  \end{equation}
  a linear combination of the terms $z_{n-1}, z_n$, and $z_{n+1}$,
  with $z_{n-1}$ appearing with coefficient $1$.
  Similarly, if $n \geq 3$,
  \begin{equation}
    \begin{array}{lcl}
      (P_V A)^3 z_{n+1} &=&   P_V A (z_{n-1} - 2z_{n} + z_{n+1})    \\
      &=&  \{z_{n-2} - z_{n-1} - 2z_{n-1} + 2z_{n} + z_n - z_{n+1} \}  \\
      &=&  \{ z_{n-2} - 3z_{n-1} + 3z_n - z_{n+1} \}.
  \end{array}
  \end{equation}
  By iterative composition, $(P_V A)^m z_{n+1}$ is linear combination of the terms
  $z_p$ for $n-m+1 \leq p \leq n+1$, 
  with $z_{n-m+1}$ appearing with coefficient $1$,
  as long as $n-m+1 \geq 0$.
  Since $\dom A$ is a linear subspace of $X$,
  $(P_V A)^m z_{n+1} \subset \dom A$ if $n \geq m$.
  However, if $n+1 = m$ the single point in $(P_V A)^m z_{n+1}$ is not in $\dom A$
  since $z_0 = x \notin \dom A$.
\end{proof}

For any linear relation $A : X \to 2^X$ where $\dom A$ is not closed, sequences like those
in Proposition~\ref{p:curiousLRresult} are plentiful.
Every point $x \in \overline{\dom A}$
such that $x \notin \dom A$, including for instance
the points $\lambda x$ for $\lambda > 0$, generates a different
sequence $(z_n)_{\nnn}$ using the method from the proof of Proposition~\ref{p:curiousLRresult}.

To explore these concepts, consider the following example.

\begin{example}
  \label{ex:growunboundedLRell2}
	Consider the infinite dimensional Hilbert space $\ell^2$, the space of infinite
	sequences $\mathbf{x} = (x_k)_{k \in \NN}$ such that $\sum_{k=1}^{+\infty} x_k^2 < +\infty$.
	Let $\mathbf{e}_k$ denote the $k$th standard unit vector 
	(the $k$th element in the sequence is $1$, and all other elements in the sequence are $0$).
  Define the single-valued monotone relation $A : \ell_2 \to \ell_2$
  defined for $x \in \dom A$ by
  \[
  A \mathbf{x} = A (\sum_{k=1}^{+\infty} x_k \mathbf{e}_{k}) := 
  \sum_{k=1}^{+\infty} k x_k \mathbf{e}_{k},
  \]
  where
  \[ 
  \dom A := \left\{
  x \in \ell_2 : 
  \exists N \in \NN \textrm{~s.t.~} x_k = 0 ~\forall k \geq N \right\}.
  \]
\end{example}

Considering the linear relation $A$ in the example above,
the point $\mathbf{x} := \sum_{k=1}^{+\infty} \frac{1}{k} \mathbf{e}_k$ is not in $\halo A$.
This is because the sequence $(\mathbf{y}_n)_{\nnn} \subset \dom A$ where $\mathbf{y}_n := \sum_{i=1}^n \frac{1}{2i} \mathbf{e}_i$ 
eventually violates (\ref{e:halo}) for any choice of $M > 0$ for a large enough $n$.
(Therefore we know that $A$ is not maximal monotone.)
However, the point $\mathbf{z} := \sum_{i=1}^{+\infty} \frac{1}{i^2} \mathbf{e}_i$ is in $\halo A$,
and $\gra A$ could be extended by the point $(\mathbf{z},\mathbf{x})$ and remain monotone.
Since $\mathbf{x} \in \overline{\dom A}$ but $\mathbf{x} \notin \halo A$,
yet $\mathbf{x} = A\mathbf{z}$ and $\mathbf{z} \in \halo A$, we have the beginning of a sequence
like those in Proposition~\ref{p:curiousLRresult} for any monotone extension of $A$
containing $(\mathbf{z},\mathbf{x})$ that is also a linear relation.

Finally, the following result is used later and appears in Proposition~4.6 in
\cite{HHBBorweinDecomp}.

\begin{proposition}[\cite{HHBBorweinDecomp}]
  \label{p:MMsymmsubgrads}
  Suppose that $A:X \to 2^X$ is a linear relation.
  Then $A$ is maximal monotone and symmetric if and only if 
  there exists a proper lower semicontinuous convex function $f : X \to \RR \bigcup \{+\infty\}$
  such that $A = \partial f$.
\end{proposition}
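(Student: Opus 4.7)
The plan is to treat the two implications separately, both built around the quadratic form $q(x) := \tfrac{1}{2}\langle x, Ax \rangle$ on $\dom A$, which is well-defined as a single-valued function thanks to Fact~\ref{f:LRsimpleip}. For the forward implication, assume $A$ is maximal monotone and symmetric (meaning $\langle x, Ay \rangle = \langle y, Ax \rangle$ for all $x, y \in \dom A$). Define $f : X \to \RR \cup \{+\infty\}$ by $f(x) := q(x)$ on $\dom A$ and $f(x) := +\infty$ otherwise. Symmetry combined with monotonicity (which gives $q(x) \geq 0$ via $\langle x, Ax \rangle = \langle x - 0, Ax - A0 \rangle$) implies that $(x,y) \mapsto \langle x, Ay \rangle$ is a symmetric positive semidefinite bilinear form on $\dom A$, so $f$ is convex. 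The key computation is that, for $(x,x^*) \in \gra A$ and $z \in \dom A$, linearity of $A$ yields
\[ f(z) - f(x) - \langle z - x, x^* \rangle = \tfrac{1}{2}\langle z - x, A(z-x) \rangle \geq 0. \]
Hence $\gra A \subseteq \gra \partial f$, and since $\partial f$ is monotone while $A$ is maximal monotone, $\partial f = A$.

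For the converse, assume $A = \partial f$ for a proper lsc convex $f$. Then $A$ is maximal monotone by Rockafellar's classical theorem. To establish symmetry, fix $x, y \in \dom A$ and compute $f(x) - f(y)$ along two different polygonal paths inside the linear subspace $\dom A$. The directional-derivative formula $f'(z; h) = \langle h, Az \rangle$ is valid for all $h, z \in \dom A$ (since $\partial f(z) = z^* + A0$ with $A0 \subseteq (\dom A)^{\perp}$ by Proposition~\ref{p:LRdomain0}, so $\langle h, \partial f(z) \rangle$ is a singleton). Combined with linearity of $A$, the fundamental theorem of calculus applied to the convex function $t \mapsto f(y + t(x-y))$ on $[0,1]$ yields
\[ f(x) - f(y) = \tfrac{1}{2}\langle x, Ax \rangle - \tfrac{1}{2}\langle y, Ay \rangle + \tfrac{1}{2}\langle x, Ay \rangle - \tfrac{1}{2}\langle y, Ax \rangle, \]
whereas integrating along the L-shaped path $y \to 0 \to x$ (which stays in $\dom A$ since it is a subspace) gives instead $f(x) - f(y) = \tfrac{1}{2}\langle x, Ax \rangle - \tfrac{1}{2}\langle y, Ay \rangle$. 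Equating the two expressions forces $\langle x, Ay \rangle = \langle y, Ax \rangle$, which is the desired symmetry.

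The main obstacle is handling the case in which $\dom A$ is a proper, possibly non-closed, linear subspace. In the forward direction, $f$ as defined need not be lower semicontinuous, so I would replace $f$ by its lower semicontinuous hull $\bar f$; since $\gra \partial \bar f \supseteq \gra \partial f \supseteq \gra A$ and $\partial \bar f$ is monotone, maximality of $A$ forces $\partial \bar f = A$, and $\bar f$ remains proper convex lsc. In the converse direction, the polygonal integration is justified because every segment with endpoints in $\dom A$ lies in $\dom A$, and a convex function finite on a closed segment is absolutely continuous on its interior with one-sided derivatives matching the subgradient support; Corollary~\ref{c:LRMMb55} (closedness of $A0$ in the maximal monotone case) ensures that the inner-product singleton argument of Proposition~\ref{p:LRdomain0} applies cleanly throughout.
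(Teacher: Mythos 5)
The paper never proves this proposition itself; it only cites Proposition~4.6 of \cite{HHBBorweinDecomp}, so there is no in-paper argument to compare against, and your proof has to stand on its own --- which it does: it is correct. In the forward direction, defining $f$ as $\tfrac12\langle x,Ax\rangle$ on $\dom A$ (well defined by Fact~\ref{f:LRsimpleip}, convex because symmetry plus monotonicity make the pairing a positive semidefinite bilinear form) and $+\infty$ elsewhere, your identity $f(z)-f(x)-\langle z-x,x^*\rangle=\tfrac12\langle z-x,A(z-x)\rangle\ge 0$ does give $\gra A\subseteq\gra\partial f$, and the lsc-hull repair is the right one: wherever $\partial f(x)\neq\emptyset$ one has $\bar f(x)=f(x)$ and $\partial f(x)\subseteq\partial\bar f(x)$, so monotonicity of $\partial\bar f$ and maximality of $A$ force $A=\partial\bar f$ with $\bar f$ proper, lsc and convex. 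In the converse, maximality is Rockafellar's theorem and your two-path integration does yield symmetry; the only step you assert more strongly than you justify is the pointwise formula $f'(z;h)=\langle h,Az\rangle$, since in general one only gets $f'(z;h)\ge\langle h,z^*\rangle$ and the singleton-pairing remark addresses well-definedness, not equality. The formula is nevertheless valid here, for the reason you gesture at in your final paragraph: along $t\mapsto z+th$, which stays in $\dom A=\dom\partial f$, the convex function $\phi(t)=f(z+th)$ satisfies $\phi'_-(t)\le\langle h,A(z+th)\rangle=\langle h,Az\rangle+t\langle h,Ah\rangle\le\phi'_+(t)$, so $\phi$ is Lipschitz on $[0,1]$ with derivative a.e.\ equal to this affine function (and letting $t\downarrow 0$ recovers the exact directional-derivative formula), while lower semicontinuity of $f$ gives continuity of $\phi$ at the endpoints, so the fundamental theorem of calculus legitimizes both path computations and forces $\langle x,Ay\rangle=\langle y,Ax\rangle$. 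With that one sandwich argument spelled out, your proof is complete, and it follows the same classical lines as the cited source: quadratic form plus closure in one direction, integration of the subdifferential along segments in the other.
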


\section{Monotone classes of linear relations}
\label{s:LRPM}

The recent result for paramonotonicity and $3^*$-monotonicity below appears in 
\cite{HXLtoappear}.  

\begin{proposition}[\cite{HXLtoappear}]
  \label{p:HXLPM3*main}
  Suppose $A : X \to 2^X$ is a maximal monotone linear relation such that
  $\dom A$ and $\ran A_+$ are closed ($A_+$ is the symmetric part of $A$).
  Then, $A$ is $3^*$-monotone if and only if $A$ is paramonotone.
\end{proposition}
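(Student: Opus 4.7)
The plan is to reduce to a bounded linear operator on a Hilbert space using the closed-domain hypothesis, rewrite both paramonotonicity and $3^*$-monotonicity in terms of the kernels and ranges of $A$ and its symmetric part $A_+$, and then close the equivalence via an orthogonal-complement argument that exploits the closed-range hypothesis on $A_+$.

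\textbf{Reduction.} Since $\dom A$ is closed, Corollary~\ref{c:LRMMb55} gives $V := \dom A = (A0)^\perp$, and Proposition~\ref{p:LRdimreduction} produces a single-valued maximal monotone selection $\tilde A := P_V A$ on $V$ with $\dom \tilde A = V$; by Proposition~\ref{p:linearMM} together with the standard fact that a maximal monotone everywhere-defined linear operator on a Hilbert space is bounded, $\tilde A$ is a bounded linear operator on $V$. Because $Ax = \tilde A x + V^\perp$ on $V$, the $V^\perp$-component is invisible to any inner product with vectors in $V$, and the PM-conclusion ``$x^* \in Ay$ and $y^* \in Ax$'' amounts to $\tilde A x = \tilde A y$. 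Parallel computations show that $A$ is paramonotone (resp.\ $3^*$-monotone) iff $\tilde A$ is, and that $\ran A_+$ is closed iff $\ran \tilde A_+$ is closed. Thus I may assume $A$ is a bounded linear monotone operator on $V$ with $\ran A_+$ closed.

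\textbf{Rewriting the two conditions.} Using that $\gra A$ is a linear subspace, paramonotonicity becomes: $(z, z^*) \in \gra A$ with $\langle z, z^* \rangle = 0$ forces $z^* = 0$. Since $\langle z, Az \rangle = \langle z, A_+ z \rangle = \|A_+^{1/2} z\|^2$, this is equivalent to $\ker A_+ \subset \ker A$. For $3^*$-monotonicity, writing $x^* = A x_0$ with $x_0 \in V$ and substituting $y = y' + x_0$ in~\eqref{e:3starmonotone} collapses the two parameters $(z,x^*)$ into the single one $u := z - x_0$, yielding the equivalent requirement
\begin{equation*}
  (\forall u \in V)\quad \sup_{y \in V}\, \langle A^* u, y \rangle - \langle y, A_+ y \rangle < +\infty.
\end{equation*}
The Fenchel conjugate of the quadratic form $y \mapsto \|A_+^{1/2} y\|^2$ has effective domain $\ran A_+^{1/2}$, and by the spectral theorem $\ran A_+^{1/2} = \ran A_+$ whenever $\ran A_+$ is closed. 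Hence $3^*$-monotonicity is equivalent to $\ran A^* \subset \ran A_+$.

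\textbf{Closing the equivalence.} Self-adjointness of $A_+$ combined with closed range gives $\ker A_+ = (\ran A_+)^\perp$, while $\ker A = (\ran A^*)^\perp$ holds for any bounded operator. Taking orthogonal complements, $\ker A_+ \subset \ker A$ translates to $\overline{\ran A^*} \subset \ran A_+$, and closedness of $\ran A_+$ collapses this back to $\ran A^* \subset \ran A_+$. This establishes the desired equivalence of paramonotonicity and $3^*$-monotonicity. The main obstacle is the bookkeeping in the reduction step, particularly verifying that ``$\ran A_+$ closed'' transfers between $A$ and $\tilde A$; a secondary delicate point is the Fenchel-duality identity $\ran A_+^{1/2} = \ran A_+$, which rests squarely on the closed-range hypothesis and would fail without it.
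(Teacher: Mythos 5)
Your proposal is correct, but it cannot be compared with ``the paper's own proof'' because the paper gives none: Proposition~\ref{p:HXLPM3*main} is quoted from \cite{HXLtoappear} (which, as the surrounding text notes, works via the Fitzpatrick function), and the paper's own contribution in Section~\ref{s:LRPM} is only the one implication $3^*\Rightarrow$PM under a density hypothesis (Proposition~\ref{p:linear3*isPM}, Corollary~\ref{c:linear3*isPM}). Your argument supplies both directions by a different, self-contained, operator-theoretic route: the closed-domain hypothesis plus Corollary~\ref{c:LRMMb55} and Proposition~\ref{p:LRdimreduction} reduce $A$ to a bounded, everywhere-defined monotone operator $\tilde{A}$ on $V=\dom A=(A0)^{\perp}$ (everywhere-defined monotone linear operators are indeed bounded); paramonotonicity becomes $\ker \tilde{A}_+\subset\ker \tilde{A}$, while $3^*$-monotonicity becomes $\ran \tilde{A}^*\subset\ran \tilde{A}_+^{1/2}$ via the Fenchel conjugate of $y\mapsto\langle y,\tilde{A}_+y\rangle$, whose effective domain is exactly $\ran \tilde{A}_+^{1/2}$; and closedness of $\ran A_+$ enters precisely twice, to give $\ran \tilde{A}_+^{1/2}=\ran \tilde{A}_+$ and to collapse $\overline{\ran \tilde{A}^*}\subset\ran\tilde{A}_+$ to $\ran \tilde{A}^*\subset\ran\tilde{A}_+$, after which the kernel and range formulations coincide by orthogonal complementation. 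This is in spirit the Br\'ezis--Haraux characterization (compare Proposition~\ref{p:3*linearisalphainverseSM}) rather than the Fitzpatrick route of the cited source; what it buys is a transparent accounting of the hypotheses, and it is consistent with Example~\ref{ex:linearPMnot3*}, where $\ran A_+$ is not closed and PM holds without $3^*$. The two points you leave as bookkeeping are genuinely true and routine: with the natural definition $A_+=\tfrac12(A+A^*)$ for relations one gets $A_+x=\tilde{A}_+x+(\dom A)^{\perp}$ on $V$, so $\ran A_+$ is the orthogonal sum of $\ran\tilde{A}_+$ and the closed subspace $A0$ and closedness transfers; and the domain identity $\dom\bigl(\langle\cdot,Q\,\cdot\rangle\bigr)^*=\ran Q^{1/2}$ together with $\ran Q^{1/2}=\ran Q$ for closed-range positive semidefinite $Q$ are standard spectral facts. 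So the gaps are of detail, not of substance.
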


In this section we use a different approach to that used for Proposition~\ref{p:HXLPM3*main},
where we (while avoiding the use of the Fitzpatrick function)
obtain results that apply to all monotone operators regardless maximal monotonicity.
This is done by examining
the density of $\dom A$ rather than its closure, further extending these results.
First, we characterize paramonotonicity for linear relations with the following two facts.

\begin{fact}
  Suppose $A:X \to 2^X$ is a monotone linear relation.
  Then, $A$ is paramonotone if and only if
  for all $x \in X$
  \begin{equation}
    \langle x, Ax \rangle = 0 \Rightarrow Ax = A0.
    \label{e:LRpmchar}
  \end{equation}
\end{fact}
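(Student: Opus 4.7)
The plan is to reduce paramonotonicity to a statement purely at $x-y$ by exploiting two structural facts for linear relations: first, $Ax = x^* + A0$ for any $x^* \in Ax$ (Fact~\ref{f:lineartraits}\ref{lineartraits4}), and second, the quantity $\langle y, Az\rangle$ is well-defined (single-valued) whenever $y,z \in \dom A$ (Fact~\ref{f:LRsimpleip}). These two ingredients let us collapse the two-point condition defining paramonotonicity into a one-point condition at $x - y$, which is exactly what (\ref{e:LRpmchar}) records.

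For the forward direction, I would assume $A$ is paramonotone, fix $x \in \dom A$ with $\langle x, Ax\rangle = 0$, and simply apply the paramonotonicity condition with the comparison point $(y,y^*) := (0,0) \in \gra A$. This immediately gives $x^* \in A0$ for every $x^* \in Ax$, and then $Ax = x^* + A0 = A0$ since $A0$ is a subspace. (The case $x \notin \dom A$ is vacuous because $Ax = \emptyset$.)

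For the converse, I would take arbitrary $(x,x^*), (y,y^*) \in \gra A$ with $\langle x-y, x^* - y^*\rangle = 0$ and invoke linearity to note that $(x-y, x^*-y^*) \in \gra A$, so $x^*-y^* \in A(x-y)$. Since $x-y \in \dom A$, Fact~\ref{f:LRsimpleip} tells us that $\langle x-y, A(x-y)\rangle$ is a single number, which therefore equals $\langle x-y, x^*-y^*\rangle = 0$. The hypothesis (\ref{e:LRpmchar}) then gives $A(x-y) = A0$, so $x^* - y^* \in A0$. Because $A0$ is a subspace, $Ax = x^* + A0 = y^* + A0 = Ay$, yielding both $x^* \in Ay$ and $y^* \in Ax$, as required.

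I do not anticipate a genuine obstacle; the only subtle point is the bookkeeping distinction between an element of a fibre and the fibre itself, i.e.\ being careful that $\langle x, Ax\rangle$ denotes the well-defined common value $\langle x, x^*\rangle$ for any $x^* \in Ax$ (via Fact~\ref{f:LRsimpleip}, since $x \in \dom A$ and $A0 \subset (\dom A)^\perp$ by Proposition~\ref{p:LRdomain0}), and that translating ``$x^* \in A0$'' into ``$Ax = A0$'' uses Fact~\ref{f:lineartraits}\ref{lineartraits4} plus the subspace property of $A0$.
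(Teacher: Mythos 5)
Your proposal is correct and follows essentially the same route as the paper: both directions reduce the two-point paramonotonicity condition to the one-point condition at $x-y$ via linearity of the graph, Fact~\ref{f:LRsimpleip}, and the coset identity $Ax = x^* + A0$. The only cosmetic difference is that in the forward direction you compare against the specific pair $(0,0)\in\gra A$, while the paper phrases the same step as $\langle x-0, Ax - A0\rangle = 0$ using $A0 \subset (\dom A)^{\perp}$.
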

\begin{proof}
  Suppose that $A$ is paramonotone
  and that for some $x \in \dom A$, 
  $\langle x, Ax \rangle = 0$.
  Then, $\langle x - 0, Ax - A0 \rangle = 0$,
  since $A0 \subset (\dom A)^{\perp}$ (Proposition~\ref{p:LRdomain0}).
  Therefore, by paramonotonicity,
  every $x^* \in Ax$ is also in $A0$.
  By Fact~\ref{f:lineartraits} \ref{lineartraits3} and~\ref{lineartraits4},
  $Ax = A0$.\\
  Now, suppose that (\ref{e:LRpmchar}) holds for $A$ and that for some
  $(y,y^*), (z,z^*) \in \gra A$,
  \[\langle y - z, y^* - z^* \rangle = 0. \]
  Let $x = y - z$.  Since $A$ is a linear relation,
  $y^* - z^* \in Ax$, and so $\langle x, Ax \rangle = 0$.
  Therefore, $Ax = A0$, and so $y^* - z^* \in A0$
  and
  \[ y^* \in z^* + A0; \qquad -z^* \in -y^* + A0. \]
  By Fact~\ref{f:lineartraits} \ref{lineartraits1} and~\ref{lineartraits4},
  $-y^* + A0 = -Ay$.
  Hence $y^* \in Az$ and $z^* \in Ay$, so $A$ is paramonotone.
\end{proof}
  
\begin{fact}
  \label{f:LRequivA0}
  Suppose $A:X \to 2^X$ is a monotone linear relation,
  and let $x \in X$.
  Then, $Ax = A0$ if and only if $0 \in Ax$ and if $0 \in Ax$,
  then 
  $P_{A0^{\perp}} Ax = \{0\}$.
  If $A0$ is closed and $P_{A0^{\perp}} Ax = \{0\}$,
  then $0 \in Ax$.
\end{fact}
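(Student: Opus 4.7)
The plan is to dispatch the three claims in sequence using only Fact~\ref{f:lineartraits} (that $A0$ is a linear subspace and that $Ax = x^* + A0$ for any $x^* \in Ax$) together with Proposition~\ref{p:lineartraitA0perp}.

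First I would establish the equivalence $Ax = A0 \Leftrightarrow 0 \in Ax$. The forward direction is immediate since $0 \in A0$ (any linear subspace contains zero), so $0 \in A0 = Ax$. For the converse, if $0 \in Ax$, then taking $x^* = 0$ in the identity $Ax = x^* + A0$ gives $Ax = 0 + A0 = A0$.

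Next, for the second assertion, suppose $0 \in Ax$. By the equivalence just proved, $Ax = A0$. Every element of $A0$ lies in $\overline{A0}$, so its component in the orthogonal complement $A0^{\perp}$ vanishes; hence $P_{A0^{\perp}} Ax = P_{A0^{\perp}} A0 = \{0\}$.

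Finally, for the third claim, I would invoke the stronger part of Proposition~\ref{p:lineartraitA0perp} that applies when $A0$ is closed: there exists a unique $x_0^* \in Ax$ with $x_0^* \in A0^{\perp}$, given by $x_0^* = P_{A0^{\perp}} x^*$ for any $x^* \in Ax$. Thus $P_{A0^{\perp}} Ax = \{x_0^*\}$, and the hypothesis that this singleton equals $\{0\}$ forces $x_0^* = 0 \in Ax$. There is no serious obstacle here, but it is worth flagging why closedness of $A0$ is genuinely needed for this last implication: without it, Proposition~\ref{p:lineartraitA0perp} only guarantees $Ax \subset P_{A0^{\perp}} Ax + \overline{A0}$, so $P_{A0^{\perp}} Ax = \{0\}$ would only yield $Ax \subset \overline{A0}$, which is strictly weaker than $0 \in Ax$ since the approximating element of the closure need not actually sit inside $A0$.
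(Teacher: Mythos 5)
Your proof is correct and follows essentially the same route as the paper: the equivalence via Fact~\ref{f:lineartraits} (iii) and (iv), and the projection claims via Proposition~\ref{p:lineartraitA0perp}, with only a cosmetic difference in how you see that $P_{A0^{\perp}}Ax=\{0\}$ when $0\in Ax$. Your closing remark on why closedness of $A0$ is needed is a sensible addition but does not change the argument.
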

\begin{proof}
  Let $Ax = A0$.  Since $A0$ is a linear subspace of $X$
  (Fact~\ref{f:lineartraits} \ref{lineartraits3}),
  $0 \in Ax$. \\
  Let $0 \in Ax$.  Then, by 
  Fact~\ref{f:lineartraits} \ref{lineartraits4},
  $Ax = A0$.\\
  By Proposition~\ref{p:lineartraitA0perp},
  $P_{A0^{\perp}} Ax$ is a singleton, and since $0 \in A0^{\perp}$
  by the definition of a perpendicular set,
  $P_{A0^{\perp}} Ax = \{0\}$.\\
  Let $P_{A0^{\perp}} Ax = \{0\}$ and suppose that $A0$ is closed.
  Then, by Proposition~\ref{p:lineartraitA0perp},
  $0 \in Ax$.
\end{proof}

\begin{proposition}
	\label{p:linear3*isPM}
  Suppose $A:X \to 2^X$ is a monotone linear relation such that $\dom A$ 
  is dense in $A0^{\perp}$ and $A0$ is closed.
  If $A$ is $3^*$ monotone, then $A$ is also paramonotone.
\end{proposition}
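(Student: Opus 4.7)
The plan is to use the paramonotonicity characterization from the preceding fact, which reduces the claim to showing that $\langle x, Ax\rangle = 0$ forces $Ax = A0$, and then to turn $3^*$-monotonicity into a usable constraint by exploiting the linearity (scaling) of $\gra A$.

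Concretely, fix $x \in \dom A$ with $\langle x, Ax\rangle = 0$ (well defined as a number by Fact~\ref{f:LRsimpleip}) and pick any $x_0^* \in Ax$. By Fact~\ref{f:LRequivA0}, since $A0$ is closed it suffices to prove that $x_0^* \in A0$, since then $Ax = x_0^* + A0 = A0$. The main idea is to test $3^*$-monotonicity against the one-parameter family $(\lambda x, \lambda x_0^*) \in \gra A$, $\lambda \in \RR$, which lies in $\gra A$ by linearity.

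For arbitrary $z \in \dom A$ and $x^* \in \ran A$, expanding
\[
\langle z - \lambda x,\, \lambda x_0^* - x^*\rangle = -\lambda^2 \langle x, x_0^*\rangle + \lambda\bigl(\langle z, x_0^*\rangle + \langle x, x^*\rangle\bigr) - \langle z, x^*\rangle,
\]
and using $\langle x, x_0^*\rangle = \langle x, Ax\rangle = 0$, the quadratic term drops out. For the supremum over $\lambda \in \RR$ to be finite (as required by $3^*$-monotonicity), the linear coefficient must vanish:
\[
\langle z, x_0^*\rangle + \langle x, x^*\rangle = 0 \qquad \forall z \in \dom A,\ \forall x^* \in \ran A.
\]
Now take $x^* = 0$, which lies in $\ran A$ because $0 \in A0$. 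This yields $\langle z, x_0^*\rangle = 0$ for all $z \in \dom A$, i.e.\ $x_0^* \in (\dom A)^\perp$.

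The final step invokes the density hypothesis: since $\dom A$ is dense in $A0^\perp$, we get $(\dom A)^\perp = (A0^\perp)^\perp = \overline{A0} = A0$, using that $A0$ is closed. Hence $x_0^* \in A0$, so $Ax = A0$, and paramonotonicity follows from the preceding fact. The one mildly delicate point is recognizing that the scaling trick converts the finite-supremum condition into the vanishing of a linear coefficient; everything else is bookkeeping. The density hypothesis is used in exactly one place (identifying $(\dom A)^\perp$ with $A0$), which shows why weakening ``closed domain'' to ``dense domain in $A0^\perp$'' still suffices and extends the scope beyond the maximal monotone setting of Proposition~\ref{p:HXLPM3*main}.
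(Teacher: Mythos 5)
Your proof is correct, and while it rests on the same engine as the paper's proof --- testing the $3^*$-inequality along the ray $(\lambda x,\lambda x_0^*)\in\gra A$ and using $\langle x,Ax\rangle=0$ to kill the quadratic term --- the way you close the argument is genuinely different. The paper argues by contraposition: it forms the projection $x_0^*=P_{A0^{\perp}}x^*$ (needing $A0$ closed so that $x_0^*\in Ax$ and $x_0^*\neq 0$), uses the density of $\dom A$ in $A0^{\perp}$ to manufacture an explicit test point $w\in\dom A$ with $\langle w,Ax\rangle\geq\tfrac12\|x_0^*\|^2$, and then lets $\lambda\to+\infty$ to exhibit an unbounded family, contradicting $3^*$-monotonicity. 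You instead argue directly: since $\lambda$ ranges over all of $\RR$ (the graph is a subspace), finiteness of the supremum forces the linear coefficient $\langle z,x_0^*\rangle+\langle x,x^*\rangle$ to vanish for every $z\in\dom A$ and $x^*\in\ran A$; choosing $x^*=0\in\ran A$ gives $x_0^*\in(\dom A)^{\perp}$, and the hypotheses enter only once, through $(\dom A)^{\perp}=(A0^{\perp})^{\perp}=\overline{A0}=A0$, whence $Ax=x_0^*+A0=A0$ and paramonotonicity follows from the characterization (\ref{e:LRpmchar}). Your route avoids the projection and the approximation step entirely, and it makes transparent exactly where density and closedness of $A0$ are used (the latter is precisely what fails in Example~\ref{ex:LR3*notPM}); the paper's version, in exchange, constructively displays the unbounded family witnessing the failure of $3^*$-monotonicity. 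One small remark: your appeal to Fact~\ref{f:LRequivA0} is unnecessary --- Fact~\ref{f:lineartraits}~\ref{lineartraits4} already gives $Ax=x_0^*+A0$, so $x_0^*\in A0$ implies $Ax=A0$ --- but this does not affect correctness.
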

\begin{proof}
Suppose that $A$ is not paramonotone,
so there exists an $x \in \dom A$ such that $\langle x , Ax \rangle = 0$ yet $A x \neq A0$.
Choose any $x^* \in Ax$, and let $x_0^* = P_{A0^{\perp}}x^*$.
By Fact~\ref{f:LRequivA0}, $x_0^* \neq 0$ since $A0$ is closed.
If $x_0^* \in \dom A$, let $w = \frac{1}{2} x_0^*$.
If $x_0^* \notin \dom A$, there is a sequence $(y_n)_{\nnn} \subset \dom A$ converging 
to $x_0^*$ since $\dom A$ is dense in $A0^{\perp}$.  
In this case, let $w = y_n$ for some $n$
such that 
\[ \langle w, Ax \rangle = \langle y_n, x_0^* \rangle \geq \frac{1}{2} \|x_0^*\|^2 . \]
Let $v = \lambda x$ for some $\lambda > 0$ and let $u=0$ so that
\[
\langle w - v, Av - Au  \rangle = \langle w - \lambda x, \lambda Ax \rangle \geq \frac{\lambda}{2} \|x_0^*\|^2
\]
which is unbounded with respect to $\lambda$.
Hence, $A$ is not $3^*$-monotone, yielding the contrapositive.
\end{proof}


We therefore obtain by a different method the following result from \cite{HXLtoappear}.

\begin{corollary}[\cite{HXLtoappear}]
  \label{c:linear3*isPM}
  If the linear relation $A : X \to 2^X$ is maximal monotone
  and $3^*$-monotone,
  then $A$ is paramonotone.
\end{corollary}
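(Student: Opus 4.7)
The plan is to observe that this corollary is an immediate specialization of Proposition~\ref{p:linear3*isPM}, once we translate maximal monotonicity into the two structural hypotheses used there. So there is essentially nothing new to prove; the real work was already done in Proposition~\ref{p:linear3*isPM}, and the task here is simply to verify that its hypotheses are satisfied.

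First I would invoke Corollary~\ref{c:LRMMb55}: since $A$ is a maximal monotone linear relation, we have $(\dom A)^{\perp} = A0$, and consequently $\overline{\dom A} = (A0)^{\perp}$, and moreover $A0$ is a closed subspace of $X$. Rewriting the closure identity, $\dom A$ is a dense subset of $A0^{\perp}$. Thus both running hypotheses of Proposition~\ref{p:linear3*isPM} — density of $\dom A$ in $A0^{\perp}$ and closedness of $A0$ — are automatically supplied by maximal monotonicity.

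Second, since $A$ is also assumed $3^*$-monotone, Proposition~\ref{p:linear3*isPM} applies directly and yields that $A$ is paramonotone.

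There is no genuine obstacle here; the only subtlety to check is that the proposition's hypotheses really do follow for free in the maximal case, which is exactly the content of Corollary~\ref{c:LRMMb55}. The point of stating this corollary separately is to recover, by a different route, the known result from \cite{HXLtoappear} without invoking the Fitzpatrick function or requiring closedness of $\ran A_+$ as in Proposition~\ref{p:HXLPM3*main}.
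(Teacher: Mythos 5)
Your proposal is correct and follows exactly the paper's route: cite Corollary~\ref{c:LRMMb55} to get closedness of $A0$ and density of $\dom A$ in $A0^{\perp}$, then apply Proposition~\ref{p:linear3*isPM}. The paper's own proof is the same argument, stated more tersely.
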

\begin{proof}
  Follows directly from Proposition~\ref{p:linear3*isPM} and Corollary~\ref{c:LRMMb55}.
\end{proof}

\begin{corollary}
  If the linear relation $A : X \to 2^X$ is $3^*$-monotone,
  then the operator $\tilde{A} : X \to 2^X$ defined by
  \begin{equation}
    \tilde{A} x := Ax + (\dom A)^{\perp}
    \label{e:LRextenddomA}
  \end{equation}
  is a linear relation and is a $3^*$-monotone extension of $A$ that is paramonotone.
\end{corollary}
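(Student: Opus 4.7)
The plan is to verify the three claims in order, with the paramonotonicity falling out of the preceding proposition once the structure of $\tilde{A}$ at $0$ is identified.

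First, that $\tilde{A}$ is a linear relation is essentially formal: $\dom\tilde{A}=\dom A$ is a linear subspace, $(\dom A)^{\perp}$ is a linear subspace, and the sum of a linear relation graph with the subspace $\{0\}\times(\dom A)^{\perp}$ is again a linear subspace of $X\times X$. One just checks $\lambda\tilde{A}x\subset\tilde{A}(\lambda x)$ and $\tilde{A}x+\tilde{A}y\subset\tilde{A}(x+y)$ using $\lambda(\dom A)^{\perp}\subset(\dom A)^{\perp}$ and $(\dom A)^{\perp}+(\dom A)^{\perp}\subset(\dom A)^{\perp}$. It is an extension of $A$ because $0\in(\dom A)^{\perp}$, so $Ax\subset Ax+(\dom A)^{\perp}=\tilde{A}x$ for every $x\in\dom A$.

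Next, I would exploit the crucial orthogonality: whenever $x,y\in\dom A$ and $u^*,v^*\in(\dom A)^{\perp}$, we have $x-y\in\dom A$ and hence $\langle x-y,u^*-v^*\rangle=0$. Writing a typical pair in $\gra\tilde{A}$ as $(x,x^*+u^*)$ with $x^*\in Ax$ and $u^*\in(\dom A)^{\perp}$, this immediately gives, for any two pairs,
\[
\langle x-y,(x^*+u^*)-(y^*+v^*)\rangle=\langle x-y,x^*-y^*\rangle\geq 0,
\]
so $\tilde{A}$ is monotone. The same cancellation proves $3^*$-monotonicity: for $z\in\dom\tilde{A}$ and $\tilde{x}^*=x^*+u^*\in\ran\tilde{A}$,
\[
\sup_{(y,\tilde{y}^*)\in\gra\tilde{A}}\langle z-y,\tilde{y}^*-\tilde{x}^*\rangle=\sup_{(y,y^*)\in\gra A}\langle z-y,y^*-x^*\rangle<+\infty,
\]
using the $3^*$-monotonicity of $A$.

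Finally, for paramonotonicity I would invoke Proposition~\ref{p:linear3*isPM} applied to $\tilde{A}$. The key calculation is identifying $\tilde{A}0$. Since $A0\subset(\dom A)^{\perp}$ by Proposition~\ref{p:LRdomain0}, we get $\tilde{A}0=A0+(\dom A)^{\perp}=(\dom A)^{\perp}$, which is closed. Consequently $\tilde{A}0^{\perp}=(\dom A)^{\perp\perp}=\overline{\dom A}$, and $\dom\tilde{A}=\dom A$ is by definition dense in $\overline{\dom A}$. All hypotheses of Proposition~\ref{p:linear3*isPM} are satisfied, and paramonotonicity of $\tilde{A}$ follows.

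No step looks like a genuine obstacle; the only point requiring care is recognizing that the enlargement by $(\dom A)^{\perp}$ is precisely what forces $\tilde{A}0$ to be closed and $\dom\tilde{A}$ to be dense in $\tilde{A}0^{\perp}$, which is exactly the hypothesis set of Proposition~\ref{p:linear3*isPM}. Everything else is orthogonality bookkeeping.
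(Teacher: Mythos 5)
Your proposal is correct and follows essentially the same route as the paper: verify the linear-relation axioms, use the orthogonality $\langle x-y, u^*-v^*\rangle=0$ for $x,y\in\dom A$, $u^*,v^*\in(\dom A)^{\perp}$ to transfer monotonicity and $3^*$-monotonicity from $A$ to $\tilde{A}$, identify $\tilde{A}0=(\dom A)^{\perp}$ via Proposition~\ref{p:LRdomain0}, and conclude paramonotonicity from Proposition~\ref{p:linear3*isPM}. No substantive differences from the paper's argument.
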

\begin{proof}
  The operator $\tilde{A}$ is a linear relation since $A$ is a linear relation,
  since $\dom \tilde{A} = \dom A$, and since $(\dom A)^{\perp}$ is a linear subspace.
  (Recall that we are using the convention that $\emptyset + S = \emptyset$
  for any set $S$.)
  More specifically, for all $x,y \in \dom \tilde{A} = \dom A$ and for all $\lambda \in \RR$,
  \[ \lambda \tilde{A} x = \lambda Ax + \lambda (\dom A)^{\perp}
  \subset A(\lambda x) + (\dom A)^{\perp} = \tilde{A} (\lambda x), \]
  and
  \[ \tilde{A} x + \tilde{A} y = Ax + (\dom A)^{\perp} + Ay
  \subset A(x + y) + (\dom A)^{\perp} = \tilde{A} (x + y). \]
  By the definition of $(\dom A)^{\perp}$,
  for all $x,y,z \in \dom \tilde{A}$
  \[ \langle z - y, \tilde{A}y - \tilde{A}z \rangle
   = \langle z - y, Ay - Az \rangle. \]
  Therefore,
  $\tilde{A}$ is monotone and $3^*$-monotone because $A$ is monotone
  and $3^*$-monotone.
  Since by Proposition~\ref{p:LRdomain0}, $A0 \subset (\dom A)^{\perp}$,
  it follows from Fact~\ref{f:lineartraits} \ref{lineartraits4}
  that $\tilde{A}$ is a monotone extension of $A$
  and that $\tilde{A}0 = (\dom A)^{\perp}$.
  Therefore, $\tilde{A}0^{\perp} = \overline{\dom A}$,
  and so by Proposition~\ref{p:linear3*isPM} and since $\dom A = \dom \tilde{A}$,
  $\tilde{A}$ is paramonotone.
\end{proof}

If the linear relation $A$ from Proposition~\ref{p:linear3*isPM} is also a single valued bounded linear operator,
then Proposition~\ref{p:linear3*isPM} is a corollary to a stronger result from
\cite{BrezisHaraux76}.

\begin{proposition} \cite{BrezisHaraux76}
	\label{p:3*linearisalphainverseSM}
	Let $A:X \to X$ be a bounded monotone linear operator.  Then, $A$ is $3^*$-monotone if and only if
	there exists an $\alpha > 0$ such that
	\[ \langle x, Ax \rangle \geq \alpha \langle Ax, Ax \rangle = \alpha \|Ax\|^2\].
\end{proposition}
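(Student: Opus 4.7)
The plan is to prove the two implications separately: direct computation handles sufficiency, while necessity requires the Banach--Steinhaus (uniform boundedness) theorem. I expect the ($\Rightarrow$) direction to be the main obstacle, since it demands extracting a \emph{uniform} constant $\alpha$ from a family of pointwise-finite supremums indexed by $z$.

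For ($\Leftarrow$), I would fix $z \in X$ and $x^* = Ax_0 \in \ran A$, substitute $y = u + x_0$ in (\ref{e:3starmonotone}), and use linearity to rewrite
\[
\langle z-y,\, Ay - x^*\rangle \;=\; \langle (z-x_0) - u,\, Au\rangle \;=\; \langle z-x_0,\, Au\rangle - \langle u, Au\rangle.
\]
Applying Cauchy--Schwarz to the first term and the hypothesis $\langle u, Au\rangle \geq \alpha \|Au\|^2$ to the second bounds this above by $\|z-x_0\|\,\|Au\| - \alpha \|Au\|^2$, a concave quadratic in $t = \|Au\|$ with maximum value $\|z-x_0\|^2/(4\alpha)$. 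Since this bound is independent of $u$, the supremum in (\ref{e:3starmonotone}) is finite and $A$ is $3^*$-monotone.

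For ($\Rightarrow$), I would specialize the $3^*$-monotone condition to $x^* = 0 \in \ran A$, obtaining
\[
(\forall z \in X)\quad M(z) \;:=\; \sup_{y \in X}\bigl[\langle z, Ay\rangle - \langle y, Ay\rangle\bigr] \;<\; +\infty.
\]
Parametrizing $y = tv$ with $t \in \RR$ gives $t\langle z, Av\rangle - t^2\langle v, Av\rangle$. Monotonicity yields $\langle v, Av\rangle \geq 0$; moreover if $\langle v, Av\rangle = 0$ and $Av \neq 0$, choosing $z = Av$ makes the expression $t\|Av\|^2$, unbounded as $t \to +\infty$ --- contradiction. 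Hence $\langle v, Av\rangle = 0 \Rightarrow Av = 0$. For all remaining $v$ the supremum over $t$ equals $\langle z, Av\rangle^2/(4\langle v, Av\rangle)$, so
\[
(\forall z \in X)(\forall v \in X,\, Av \neq 0)\quad \frac{\langle z, Av\rangle^2}{\langle v, Av\rangle} \;\leq\; 4\,M(z) \;<\; +\infty.
\]

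The remaining and main step is to pass from this pointwise bound to a uniform constant. For each $v$ with $Av \neq 0$, define the bounded linear functional $T_v : X \to \RR$ by $T_v(z) := \langle z, Av\rangle/\sqrt{\langle v, Av\rangle}$, whose operator norm is $\|T_v\| = \|Av\|/\sqrt{\langle v, Av\rangle}$. The preceding inequality gives $|T_v(z)|^2 \leq 4M(z)$ for every $z \in X$, so $\{T_v\}$ is a pointwise-bounded family of continuous linear functionals on the Hilbert space $X$. The Banach--Steinhaus theorem then yields $\sup_v \|T_v\|^2 =: 1/\alpha < +\infty$, which rearranges to $\langle v, Av\rangle \geq \alpha \|Av\|^2$ for all $v$ with $Av \neq 0$. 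The inequality holds trivially when $Av = 0$, completing the proof.
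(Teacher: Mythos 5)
Your proof is correct, and it is worth noting at the outset that the paper itself offers no argument for this statement at all: it is quoted from Br\'ezis--Haraux with only a citation, so your write-up supplies a proof where the paper has none. The ($\Leftarrow$) direction is exactly the expected completion-of-the-square computation, and in fact it never uses boundedness of $A$ --- only linearity, single-valuedness and full domain --- which is a small bonus in generality. The ($\Rightarrow$) direction is the substantive part, and your mechanism for extracting the uniform constant is sound: specializing the $3^*$-condition to $x^* = 0 \in \ran A$ gives $M(z) := \sup_y [\langle z, Ay\rangle - \langle y, Ay\rangle] < +\infty$ for every $z \in X$ (finite because $\dom A = X$, and nonnegative by taking $y=0$); the ray $y = tv$ correctly forces $\langle v, Av\rangle = 0 \Rightarrow Av = 0$ and yields $\langle z, Av\rangle^2 \leq 4 M(z)\,\langle v, Av\rangle$ whenever $Av \neq 0$; and since each $T_v(z) = \langle z, Av\rangle/\sqrt{\langle v, Av\rangle}$ is continuous (being an inner product against a fixed vector, independently of any bound on $A$), Banach--Steinhaus on the Hilbert space $X$ upgrades the pointwise bound $|T_v(z)|^2 \leq 4M(z)$ to $\sup_v \|T_v\|^2 = \sup_v \|Av\|^2/\langle v, Av\rangle < +\infty$, which is precisely the desired $\alpha$. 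The only loose ends are trivial: the degenerate case $A \equiv 0$ (empty index family), which you correctly dispose of by noting the inequality is automatic when $Av = 0$, and the observation that the supremum of the norms is strictly positive whenever the family is nonempty, so $\alpha$ is well defined. Compared with the original Br\'ezis--Haraux treatment, which obtains the constant through estimates on the bilinear form associated with $A$, your uniform-boundedness route is a genuinely different and arguably more transparent way to convert the family of pointwise-finite suprema $M(z)$ into a single uniform constant.
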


\begin{corollary}
  \label{c:blinear3*isPM}
	If $A:X \to X$ is a bounded linear $3^*$-monotone operator, then it is paramonotone.
\end{corollary}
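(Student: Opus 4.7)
The plan is to reduce the paramonotonicity condition to a statement about the kernel of $A$ by combining Proposition~\ref{p:3*linearisalphainverseSM} with linearity. First, I would invoke Proposition~\ref{p:3*linearisalphainverseSM} to obtain some $\alpha > 0$ satisfying $\langle x, Ax\rangle \geq \alpha \|Ax\|^2$ for every $x \in X$.

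Next, I would take arbitrary $(x, Ax), (y, Ay) \in \gra A$ with $\langle x - y, Ax - Ay\rangle = 0$ and use linearity of $A$ to rewrite $Ax - Ay = A(x-y)$, reducing the hypothesis to $\langle x - y, A(x-y)\rangle = 0$. Applying the $\alpha$-inequality to the point $x-y$ then yields
\[
0 = \langle x-y, A(x-y)\rangle \geq \alpha \|A(x-y)\|^2 \geq 0,
\]
which forces $A(x-y) = 0$, i.e. $Ax = Ay$. Since $A$ is single valued, this immediately gives $Ax \in \{Ay\} = Ay$ and $Ay \in \{Ax\} = Ax$, verifying the paramonotonicity condition.

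There is essentially no obstacle here; the work has already been done in Proposition~\ref{p:3*linearisalphainverseSM}. The only thing to confirm is the (trivial) observation that when $A$ is single valued, the equality $Ax = Ay$ is precisely what paramonotonicity demands. Note also that boundedness is used only through the invocation of Proposition~\ref{p:3*linearisalphainverseSM}; the reduction itself uses only linearity.
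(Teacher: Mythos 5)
Your proof is correct and is exactly the argument the paper intends: the corollary is stated immediately after Proposition~\ref{p:3*linearisalphainverseSM} with the proof left implicit, and the intended derivation is precisely your reduction, applying the inequality $\langle x, Ax\rangle \geq \alpha\|Ax\|^2$ to $x-y$ to force $A(x-y)=0$ and hence $Ax=Ay$, which for a single-valued operator is the paramonotonicity condition. No gaps.
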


However, there are $3^*$-monotone linear relations that are not paramonotone.

\begin{example} \label{ex:LR3*notPM}
  Let $X = \ell^2$ and define the operators $\tilde{A}, A : X : 2^X$ 
  by
  \begin{equation}
    \tilde{A}\mathbf{x} := \sum_{k=1}^{+\infty} x_{2k} \mathbf{e}_{2k}
    \label{e:LR3*notPMtildeA}
  \end{equation}
  and
  \begin{equation}
    A\mathbf{x} := x_1 \mathbf{u}
    \left( \sum_{k=1}^{\infty} \frac{1}{k} \mathbf{e}_{2k+1} \right)
    + \tilde{A}\mathbf{x} + A0
    \label{e:LR3*notPMA}
  \end{equation}
  where
  \begin{equation}
    \mathbf{u} := \left( \sum_{k=1}^{\infty} \frac{1}{k} \mathbf{e}_{2k+1} \right),
  \end{equation}
  \begin{equation}
  A0 := \left\{
\mathbf{x} \in \ell_2 : 
  \exists N \in \NN \textrm{~s.t.~} x_k = 0 \forall k \geq N \\ 
  \textrm{~and~} x_{2k + 1} = 0 \forall k \in \NN \right\},
    \label{e:LR3*notPMA0}
  \end{equation}
  and
  \begin{equation}
    \dom A = \dom \tilde{A} = \spann \{\mathbf{e}_1, \mathbf{e}_2, \mathbf{e}_4, \mathbf{e}_6, \ldots\}.
  \end{equation}
  Then, $A$ is a $3^*$-monotone linear relation, but it is not paramonotone.
\end{example}
\begin{proof}
  Both $A$ and $\tilde{A}$ are by definition linear relations.
  Note that $\tilde{A}$ is merely $\Id$ on $X$ with a domain reduction,
  Therefore, $\tilde{A}$ is $3^*$-monotone as it is a subgraph of $\Id$,
  which is $3^*$-monotone.
  Also, $A0$ is a dense subspace of $\spann \{\mathbf{e}_{2k+1} : k \in \NN\}$,
  and so $A0^{\perp} = \spann \{\mathbf{e}_{2k} : k \in \NN\}$.
  Therefore, $P_{A0^{\perp}} A\mathbf{x} = \tilde{A} \mathbf{x}$
  as $\mathbf{u} \in (\dom A)^{\perp}$.
  Since $\overline{A0} \subset (\dom A)^{\perp}$ (Proposition~\ref{p:LRdomain0}),
  for all $(\mathbf{x},\mathbf{x}^*), (\mathbf{y},\mathbf{y}^*), (\mathbf{z},\mathbf{z}^*) \in \gra A$,
  \[ \langle \mathbf{z} - \mathbf{y}, \mathbf{y}^* - \mathbf{x}^* \rangle
  = \langle \mathbf{z} - \mathbf{y}, P_{A0^{\perp}} \mathbf{y}^* - P_{A0^{\perp}} \mathbf{x}^* \rangle
  = \langle \mathbf{z} - \mathbf{y}, A\mathbf{y} - A\mathbf{x} \rangle, \]
  and so $A$ is also $3^*$-monotone.
  Now, 
  \[ A\mathbf{e}_1 = \mathbf{u} + A0 \not\subset A0, \]
  and so $A\mathbf{e}_1 \neq A0$. However, 
  $\langle \mathbf{e}_1, A\mathbf{e}_1 \rangle = \langle \mathbf{e}_1, \tilde{A}\mathbf{e}_1 \rangle = 0$.
  Therefore, $A$ is not paramonotone.
\end{proof}

\section{Monotone classes of linear operators}

By Proposition~\ref{p:linearMM},
monotone linear relations with full domain
are single-valued maximal monotone operators, and these operators correspond to \emph{linear operators}.
We consider linear operators henceforth in light of Proposition~\ref{p:LRdimreduction}, 
and examine their properties of monotonicity in $\RR^2$ and $\RR^n$.

The results of Sections~\ref{s:LR} and~\ref{s:LRPM} hold in their strongest form as
in $\RR^n$ all subspaces are closed.  Linear operators in $\RR^n$ are here identified with
their matrix representation in the standard basis.  Recall from
Proposition~\ref{p:MMsymmsubgrads} that symmetric linear operators are the
subdifferentials of a lower semicontinuous convex function.

\subsection{Monotone linear operators on $\RR^2$}

In this section we consider linear operators $A : \RR^2 \to \RR^2$,
which can be represented by the matrix
  \[ A = \left[ \begin{array}{cc} 
      a & c \\
      b & d 
  \end{array}\right]. \]
The operator $A$ so defined is monotone if and only if
$a + d \geq 0$ and $4ad \geq (b+c)^2$.
We consider some simple examples, examine their properties,
and provide some sufficient and necessary conditions for inclusion
within various monotone classes.

\begin{proposition}[$3$-cyclic monotone linear operators on $\RR^2$]
  \label{p:3cmlinearR2}
    If $A$ is $3$-cyclic monotone, then
    \begin{equation}
        0 \geq \max \{ |b|, |c|\} - a - d .
    \end{equation}
\end{proposition}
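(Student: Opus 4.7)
The goal is to extract the bound $a+d \geq |b|$ and $a+d \geq |c|$ from the $3$-cyclic monotonicity condition
\[
\langle x-y, Ax \rangle + \langle y-z, Ay \rangle + \langle z-x, Az \rangle \geq 0
\]
by picking a small collection of test triples $(x,y,z) \in \RR^2 \times \RR^2 \times \RR^2$. The natural approach is to work with the equivalent formulation (\ref{e:3cm2ineq}), namely $\langle z-y, Ay - Ax \rangle \leq \langle x - z, Ax - Az \rangle$, and observe that setting one of the three test points to the origin kills the $A$-image on that slot, drastically simplifying both sides.

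The plan is to take $x = 0$ throughout and let $y$ and $z$ range over $\pm e_1$ and $\pm e_2$. Concretely, with $(y,z) = (e_1, e_2)$ one gets a direct inequality of the form $-a + b \leq d$, i.e.\ $b \leq a+d$; with $(y,z) = (-e_1, e_2)$ one gets $-a - b \leq d$, i.e.\ $-b \leq a + d$. The analogous pair of choices $(y,z) = (e_2, e_1)$ and $(-e_2, e_1)$ produce $c \leq a+d$ and $-c \leq a+d$. Combining the four inequalities yields $\max\{|b|,|c|\} \leq a+d$, which is precisely the claim.

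There is no real obstacle; the only subtlety is noticing that on a linear operator the $3$-cyclic inequality already forces two-sided control on the skew-symmetric components $b$ and $c$ by the diagonal trace $a+d$, and that this two-sided control is witnessed by flipping the sign of a single test vector rather than by invoking any structural fact about $A$ (no spectral or symmetric/antisymmetric decomposition is needed). The proof therefore reduces to writing out the four specializations of (\ref{e:3cm2ineq}) and combining them.
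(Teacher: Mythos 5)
Your proposal is correct and is essentially the paper's own argument: the paper likewise fixes $x=(0,0)$ and runs the cyclic inequality over the same four choices $y,z \in \{\pm e_1, \pm e_2\}$ (namely $(e_1,e_2)$, $(-e_1,e_2)$, $(e_2,e_1)$, $(-e_2,e_1)$), obtaining $\pm b \leq a+d$ and $\pm c \leq a+d$ and hence the stated bound. The only cosmetic difference is that you specialize the rearranged form (\ref{e:3cm2ineq}) while the paper writes out (\ref{e:3cyclicgeq0}) directly, which are equivalent.
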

\begin{proof}
    Choose $x = (0,0)$,
    $y = (1,0)$, and $z = (0,1)$;
    let $x^* = Ax = (0,0)$, 
    $y^* = Ay = (a,b)$, and $z^* = Az = (c,d)$.
    If the mapping associated with $A$ is
    $3$ cyclic monotone then
    \begin{eqnarray*}
      0 &\leq&
        \langle x-y, x^* \rangle 
        + \langle y-z, y^* \rangle
        + \langle z-x, z^* \rangle    \\
        &=& \langle (1,-1), (a,b) \rangle
        + \langle (0,1), (c,d) \rangle \\
        &=& a + d - b.
    \end{eqnarray*}
    Similarly, by choosing different $y$ and $z$,
    the following conditions are also necessary for any matrix $A$ as defined
    above:
    \begin{equation}
        0 \geq 
        \left\{
        \begin{array}{cc}
            b - a - d, & y = (1,0), z = (0,1), \\
            -b - a - d, & y = (-1,0), z = (0,1), \\
            c - a - d, & y = (0,1), z = (1,0), \\
            -c -a -d, & y = (0,-1), z = (1,0).
        \end{array}
        \right. 
    \end{equation}
    In all cases, $x = (0,0)$.
\end{proof}

There are many monotone linear operators in $\RR^2$ that are not
$3$-cyclic monotone, and furthermore
Examples~\ref{ex:linear2Da}~and~\ref{ex:linear2D} below demonstrate that
$3$-cyclic monotonicity does not follow from strict and maximal monotonicity.

\begin{example}  
    \label{ex:linear2Da}
    Consider the monotone linear operator $\tilde{R} : \RR^2 \to \RR^2$
    defined by
    \begin{equation} \label{def:tildeR} 
        \tilde{R} = \left[ \begin{array}{cc} 
            1 & -2 \\ 
            3 & 1
        \end{array}\right] .
    \end{equation}
    $\tilde{R}$ violates the necessary conditions for $3$-cyclic monotonicity
    since $b - a -d > 0$ and satisfies the monotonicity conditions
    $(a + d) \geq 0$ and $4ad \geq (b+c)^2$,
    using the format
    $\tilde{R} = \left[ \begin{array}{cc} 
        a & c \\
        b & d 
    \end{array}\right]$
    above.
    Note that $\langle x, \tilde{R}x \rangle = 0$ implies that $x = 0$,
    so $\tilde{R}$ is strictly monotone and therefore paramonotone.
    Hence, by Proposition~\ref{p:linearPMis3*isPM}, $\tilde{R}$ is also $3^*$-monotone.
    $\tilde{R}$ is maximal monotone by Proposition~\ref{p:linearMM}.
\end{example}

\begin{example}
  \label{ex:linear2D}
  Consider the rotation operator $R_{\theta} : \RR^2 \to \RR^2$
  with matrix representation
    \begin{equation}
        R_{\theta} = \left[ \begin{array}{cc} 
            \cos(\theta) & -\sin(\theta) \\
            \sin(\theta) & \cos(\theta) 
        \end{array}\right].
        \label{def:rotationmatrix}
    \end{equation}
    Note that $R_{\theta}$ is monotone if and only if $|\theta| \leq \pi/2$,
    since this is precisely when $\cos(\theta) \geq 0$.
    In this range, $R_{\theta}$ is maximal monotone by Proposition~\ref{p:linearMM}.\\
    Now, $R_{\theta}$ is $3$-cyclic monotone if and only if
    $|\theta| < \pi/3$ by Fact~\ref{f:thetacyclic} below. \\
    Therefore, for any $\theta \in ]\pi/3, \pi/2[$,
    $R_{\theta}$ is maximal monotone and strictly monotone, but not $3$-cyclic monotone.\\
    Now, $\langle x, R_{\theta}x \rangle = 0$ implies that $x = 0$ unless $\theta = \pi/2$.
    Therefore, $R_{\theta}$ is strictly monotone and hence paramonotone when $|\theta| < \pi/2$.
    By Proposition~\ref{p:linearPMis3*isPM}, $R_{\theta}$ is $3^*$-monotone as well when
    $|\theta| < \pi/2$. 
    When $\theta=\pi/2$, $R_\theta$ is not paramonotone, and therefore it is neither strictly monotone,
    nor, by Proposition~\ref{p:linear3*isPM}, is it $3^*$-monotone.
\end{example}

By the following fact, $\RR^2$ is
large enough to contain distinct instances of $n$-cyclic monotone operators for $n \geq 2$.

\begin{fact}[\cite{Bauschke44} Proposition~7.1]
    \label{f:thetacyclic}
    Let $n \in \{2,3,\ldots\}$. Then $R_{\theta}$ is
    $n$-cyclic monotone if and only if $|\theta| \in [0, \pi / n]$.
\end{fact}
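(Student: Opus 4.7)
The plan is to translate the $n$-cyclic inequality into a complex Hermitian quadratic form, diagonalize it in the Fourier basis, and then read the constraint on $\theta$ directly off the resulting explicit formula.

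Identify $\mathbb{R}^2$ with $\mathbb{C}$ so that $R_\theta$ acts as multiplication by $e^{i\theta}$ and $\langle a, b\rangle = \operatorname{Re}(\bar a b)$. For $\mathbf{x} = (x_1, \ldots, x_n) \in \mathbb{C}^n$ with $x_{n+1} := x_1$, a direct computation gives
\[
\sum_{k=1}^n \langle x_k - x_{k+1}, R_\theta x_k\rangle = \operatorname{Re}\bigl(e^{i\theta} S(\mathbf{x})\bigr), \qquad S(\mathbf{x}) := \sum_{k=1}^n \bigl(|x_k|^2 - \overline{x_{k+1}} x_k\bigr),
\]
so $R_\theta$ is $n$-cyclic monotone precisely when $\operatorname{Re}(e^{i\theta} S(\mathbf{x})) \geq 0$ for every $\mathbf{x} \in \mathbb{C}^n$. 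The form $S$ equals $\mathbf{x}^*(I - P)\mathbf{x}$ for the unitary cyclic shift $(P\mathbf{x})_k := x_{k-1}$, so it is diagonalized by the Fourier basis $\mathbf{v}_k$ with $(\mathbf{v}_k)_j := \omega^{(j-1)k}$ where $\omega := e^{2\pi i/n}$; the eigenvalues of $I - P$ are $1 - \omega^{-k} = 2\sin(\pi k/n)\, e^{i(\pi/2 - \pi k/n)}$ and $\|\mathbf{v}_k\|^2 = n$.

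Expanding $\mathbf{x} = \sum_k c_k \mathbf{v}_k$ and simplifying yields
\[
\operatorname{Re}\bigl(e^{i\theta} S(\mathbf{x})\bigr) = 2n \sum_{k=1}^{n-1} |c_k|^2 \sin\!\left(\tfrac{\pi k}{n}\right)\sin\!\left(\tfrac{\pi k}{n} - \theta\right),
\]
in which each coefficient $\sin(\pi k/n)$ is strictly positive. For sufficiency, assume $|\theta| \leq \pi/n$: then for each $k \in \{1, \ldots, n-1\}$ the angle $\pi k/n - \theta$ lies in $[\pi(k-1)/n, \pi(k+1)/n] \subset [0, \pi]$, so every sine factor is non-negative and the entire sum is $\geq 0$. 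For necessity, taking $\mathbf{x} = \mathbf{v}_1$ collapses the sum to the single term $2n\sin(\pi/n)\sin(\pi/n - \theta)$, which is strictly negative once $\theta > \pi/n$; the symmetric choice $\mathbf{x} = \mathbf{v}_{n-1}$ rules out $\theta < -\pi/n$.

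The main obstacle is the algebraic setup: recognizing $S$ as a Hermitian quadratic form on $\mathbb{C}^n$ diagonalized by the Fourier basis and correctly tracking the phases of its eigenvalues $1 - \omega^{-k}$. Once this is in hand, both directions reduce to the elementary observation that $\sin$ is non-negative exactly on $[0, \pi]$.
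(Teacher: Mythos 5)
Your proof is correct, and it is genuinely different from what the paper does: the paper offers no argument at all for this fact, deferring entirely to the cited reference (\cite{Bauschke44}, Example~4.6/Proposition~7.1). Your route --- identifying $\RR^2$ with $\CC$, writing the cyclic sum as $\operatorname{Re}(e^{i\theta}\mathbf{x}^*(I-P)\mathbf{x})$ for the cyclic shift $P$, and diagonalizing in the Fourier basis --- checks out: the eigenvalue phases $e^{i(\pi/2-\pi k/n)}$ combine with $e^{i\theta}$ to give exactly $2n\sum_{k=1}^{n-1}|c_k|^2\sin(\pi k/n)\sin(\pi k/n-\theta)$, the sufficiency interval arithmetic is right for all $k\in\{1,\dots,n-1\}$, and the test vectors $\mathbf{v}_1$, $\mathbf{v}_{n-1}$ (geometrically, the regular $n$-gon traversed in either direction) give strict negativity outside $[-\pi/n,\pi/n]$, so necessity holds for the standard range $\theta\in\left]-\pi,\pi\right]$. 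What your approach buys is a short, self-contained and transparent proof in which the threshold $\pi/n$ is visibly attained by the first Fourier mode, whereas the paper buys brevity by citation. Two cosmetic points: $I-P$ is not Hermitian, so ``Hermitian quadratic form'' is a slight misnomer (you only need that $P$ is circulant/unitary, hence unitarily diagonalized by the Fourier basis, and that the cross terms cancel by orthogonality); and the necessity claim ``strictly negative once $\theta>\pi/n$'' tacitly uses the convention that $\theta$ is taken modulo $2\pi$ in $\left]-\pi,\pi\right]$, which is harmless but worth stating.
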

\begin{proof} See Example~4.6 in \cite{Bauschke44} for a detailed proof.
\end{proof}

\begin{example}
  \label{ex:zero2nd}
  The orthogonal projection $A: \RR^2 \to \RR^2$ defined by $A(x_1,x_2) := (x_1,0)$
  is maximal monotone, paramonotone, $3$-cyclic monotone, and $3^*$-monotone.
\end{example}
\begin{proof}
  Using the notation of Section~\ref{s:productSpace}, we have that $A = Id \times \mathbf{0}$, where $\mathbf{0}: \RR \to \RR$ is the zero operator,
  and $\Id : \RR \to \RR$ is the identity.
  The $\mathbf{0}$ operator is maximal monotone, paramonotone, $3$-cyclic monotone, and $3^*$ monotone,
  as is $\Id$, which is also strictly monotone, while $\mathbf{0}$ is not.
  The properties of $A$ follow directly from the results in Section~\ref{s:productSpace}.
\end{proof}

Finally, paramonotone linear operators in $\RR^2$ are further restricted to be either strictly monotone or symmetric.

\begin{proposition}
    \label{p:SMonR2}
    A linear operator $A:\RR^2 \to \RR^2$ is paramonotone if and only if it is 
    strictly monotone or symmetric.
\end{proposition}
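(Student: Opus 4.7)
My plan is to leverage the symmetric/skew-symmetric decomposition $A = A_+ + A_-$, where $A_+ = (A + A^T)/2$ and $A_- = (A - A^T)/2$. Since $A$ is a linear operator, paramonotonicity reduces to the implication
\begin{equation*}
  \langle u, Au\rangle = 0 \Rightarrow Au = 0,
\end{equation*}
where $u := x-y$. Because $\langle u, Au\rangle = \langle u, A_+ u\rangle$ and $A$ is monotone, $A_+$ is positive semidefinite, so the set $\{u : \langle u, A_+u\rangle = 0\}$ coincides with $\ker A_+$. Moreover, since $A_-$ is skew, $A_- u \perp u$, so on $\ker A_+$ we have $Au = A_- u$. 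Thus paramonotonicity is equivalent to the inclusion $\ker A_+ \subset \ker A_-$.

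Next I would split on the rank of $A_+$ in $\RR^2$. If $\det A_+ > 0$, then $\ker A_+ = \{0\}$, so $\langle u, Au\rangle = 0$ forces $u=0$, which is exactly strict monotonicity. If $\det A_+ = 0$, there are two subcases depending on $A_-$. A $2 \times 2$ skew-symmetric matrix is either $0$ or invertible, so $\ker A_- \in \{\{0\}, \RR^2\}$. In the first subcase, $A_-$ is invertible, and since $\ker A_+$ is nontrivial in this regime, the inclusion $\ker A_+ \subset \ker A_-$ fails, so $A$ is not paramonotone. In the second subcase, $A_- = 0$, i.e., $A$ is symmetric; then $A = A_+$ is PSD and $\ker A_+ \subset \ker A$ trivially, giving paramonotonicity.

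For the converse direction, strict monotonicity implies paramonotonicity by Fact~\ref{f:SMisPM}, and for a symmetric monotone linear operator the equality $A = A_+$ and positive semidefiniteness immediately yield $\langle u, Au\rangle = 0 \Rightarrow u \in \ker A$, hence paramonotonicity.

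The argument is essentially a linear-algebra dichotomy, and I do not expect a serious obstacle; the only subtle point is observing that in dimension two a nonzero skew-symmetric matrix is automatically invertible, which is what prevents a "mixed" case (nontrivial symmetric kernel plus nonzero skew part) from ever being paramonotone. This same dimensional miracle is precisely what fails in $\RR^n$ for $n \geq 3$ and will delineate where the proposition stops holding.
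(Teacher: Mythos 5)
Your proof is correct, and it shares the paper's skeleton while finishing differently. Both arguments reduce paramonotonicity of a linear operator to a kernel inclusion involving the symmetric part $A_+$ and then split on whether $\ker A_+$ is trivial: the paper cites Fact~\ref{f:LandP} ($A$ paramonotone iff monotone with $\ker A_+ \subseteq \ker A$), whereas you re-derive the equivalent inclusion $\ker A_+ \subseteq \ker A_-$ directly from the PSD identity $\langle u, A_+u\rangle = 0 \iff A_+u=0$. The divergence is in the degenerate case $\ker A_+ \neq \{0\}$: the paper grinds through the entries, combining $\det A = 0$ (so $ad=bc$) with $\det A_+ = 0$ (so $4ad=(b+c)^2$) to force $(b-c)^2=0$ and hence symmetry, while you invoke the coordinate-free fact that a nonzero $2\times 2$ skew-symmetric matrix is invertible, so a nontrivial $\ker A_+$ can lie in $\ker A_-$ only if $A_-=0$. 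Your route is cleaner and makes the dimensional obstruction transparent -- it is precisely the failure of ``nonzero skew implies invertible'' in $\RR^3$ that the paper's Example~\ref{ex:linearcexample} exploits -- while the paper's computation is more elementary and leans on the already-stated Fact~\ref{f:LandP} instead of reproving it. One cosmetic slip: in the clause ``since $A_-$ is skew, $A_-u \perp u$, so on $\ker A_+$ we have $Au = A_-u$'' the justification is misplaced; skewness is what gives $\langle u, Au\rangle = \langle u, A_+u\rangle$, whereas $Au = A_-u$ on $\ker A_+$ follows simply from $A_+u=0$. The mathematics is unaffected.
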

\begin{proof}
  Strictly monotone operators and symmetric linear operators are trivially paramonotone
  by definition and Fact~\ref{f:LandP} respectively.
  It remains to show that these are the only two possibilities.
  Assuming then that $A$ is paramonotone, consider the general case, 
  $A = \left[ \begin{array}{cc} 
      a & c \\
      b & d 
  \end{array}\right]$
  and
  $A_+ = \left[ \begin{array}{cc} 
    a & \frac{b+c}{2} \\
    \frac{b+c}{2} & d 
  \end{array}\right]$.
	If $\ker(A_+) = \{0\}$, then $A$ is strictly monotone by
  Fact~\ref{f:LandP}.
	If $\ker(A_+) \neq \{0\}$ then
  by Fact~\ref{f:LandP}
  $\ker(A_+) \subseteq \ker(A)$,
  and so $\ker(A) \neq \{0\}$, from which $\det(A)=0$ and $ad=bc$.
  Hence, since $\det(A_+)$ = 0, $4bc = (b+c)^2$, so $(b-c)^2=0$ and $b=c$. 
  Therefore $A$ is symmetric.
\end{proof}

\begin{remark}
	\label{r:linearPMnotSMimpliesCM}
  The only paramonotone linear operators in $\RR^2$
  that are not strictly monotone are the symmetric linear operators
  $A := \left[ \begin{array}{cc}
      a & b \\
      b & \frac{b^2}{a}
  \end{array} \right]
  $
  for $a > 0$ and $b \in \RR$ and
  the zero operator $x \mapsto (0,0)$.
	By Proposition~\ref{p:MMsymmsubgrads}, since both examples of $A$ are symmetric linear
	operators, they are also maximal monotone and maximal cyclical monotone,
  as they are subdifferentials of proper lower semicontinuous convex functions.
\end{remark}

All relationships between the classes of monotone linear operators 
in $\RR^2$ are now known completely and are summarized 
in Table~\ref{table:linear2D}.
Recall that all monotone linear operators are assumed to have full domain 
and are therefore maximal monotone by Proposition~\ref{p:linearMM}.

\begin{table}[!h]
  \caption{Monotone linear operators on $\RR^2$:  monotone class relationships.}
    \centering
    \begin{tabular}{c c c c | c l}
    \label{table:linear2D}
        PM & SM & 3CM & 3* \\
        \hline
        0&0&0&0& $\exists$ & Example~\ref{ex:linear2D} ($R_{\pi/2}$) \\
				0&*&*&1& $\emptyset$ & Proposition~\ref{p:linear3*isPM} \\
        *&*&1&0& $\emptyset$ & Fact~\ref{f:3CMis3*} \\
        0&*&1&*& $\emptyset$ & Proposition~\ref{p:3-2cyclic} \\
        0&1&*&*& $\emptyset$ & Fact~\ref{f:SMisPM} \\
				1&*&*&0& $\emptyset$ & Proposition~\ref{p:linearPMis3*isPM} \\
				1&0&0&*& $\emptyset$ & Remark~\ref{r:linearPMnotSMimpliesCM} \\
        1&0&1&1& $\exists$ & Example~\ref{ex:zero2nd} ($A(x_1,x_2) := (x_1,0)$) \\ 
				1&1&0&1& $\exists$ & Example~\ref{ex:linear2D} ($R_{\theta}$, $\pi/2 > |\theta| > \pi/3$)\\
        1&1&1&1& $\exists$ & $Id$ \\ 
        \hline
    \end{tabular}
    \\
\begin{tabular}{l}
Where: \\
'PM' represents paramonotone, \\
'SM' represents strictly monotone, \\
'3CM' represents $3$-cyclic monotone, and \\
1 represents that the property is present \\
0 represents an absence of that property \\
* represents that both 0/1 are covered by the example/result. \\
$\exists$ represents that an example with these properties exists. \\
$\emptyset$ represents that this combination of properties is impossible. 
\end{tabular}
\end{table}

\subsection{Linear operators on $\RR^n$}
On $\RR^n$ the restriction that linear operators are single-valued is redundant as this also follows from having full domain. 

\begin{proposition}
  A single valued monotone linear relation $A : \RR^n \to \RR^n$
  is maximal monotone if and only if $\dom A = \RR^n$.
\end{proposition}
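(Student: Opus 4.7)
The plan is to prove both directions of the equivalence using results already established in the paper. The backward direction is immediate: if $\dom A = \RR^n$, then Proposition~\ref{p:linearMM} directly implies that $A$ is maximal monotone (and in fact that the full-domain assumption together with monotone linearity forces single-valuedness, which is already assumed here).

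For the forward direction, suppose that $A$ is maximal monotone and single-valued. First I would observe that, because $A$ is single-valued, $A0 = \{0\}$. Next, I would invoke Corollary~\ref{c:LRMMb55}, which gives $(\dom A)^{\perp} = A0 = \{0\}$, hence $\overline{\dom A} = \{0\}^{\perp} = \RR^n$. So $\dom A$ is a dense linear subspace of $\RR^n$.

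The remaining (and only) step is to upgrade denseness to equality. This is where the hypothesis $X = \RR^n$ plays its role: since $\dom A$ is a linear subspace of a finite-dimensional space, it is automatically closed, so $\dom A = \overline{\dom A} = \RR^n$. There is no genuine obstacle here; the entire content of the forward direction is packaged into Corollary~\ref{c:LRMMb55} plus the finite-dimensional fact that linear subspaces are closed. The proof should therefore be a short two-sentence argument citing Proposition~\ref{p:linearMM} in one direction and Corollary~\ref{c:LRMMb55} in the other.
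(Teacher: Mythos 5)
Your proposal is correct and follows essentially the same route as the paper: the converse is Proposition~\ref{p:linearMM}, and the forward direction combines Corollary~\ref{c:LRMMb55} (via $A0=\{0\}$ for a single-valued relation) with the fact that linear subspaces of $\RR^n$ are closed. The only difference is that you spell out the intermediate steps that the paper compresses into one sentence.
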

\begin{proof}
  In $\RR^n$, all subspaces are closed, and so by Corollary~\ref{c:LRMMb55},
  any maximal monotone single valued linear relations have full domain.
  The converse follows from Proposition~\ref{p:linearMM}.
\end{proof}

Since linear operators are maximal monotone,
the following result follows from Proposition~\ref{p:HXLPM3*main}.

\begin{proposition}[\cite{Bauschke44}]
  \label{p:linearPMis3*isPM}
  Given a monotone linear operator $A : \RR^n \to \RR^n$,
  $A$ is $3^*$-monotone if and only if $A$ is paramonotone.
\end{proposition}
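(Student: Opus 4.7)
The plan is to derive this as an immediate specialization of Proposition~\ref{p:HXLPM3*main}, where the finite-dimensional setting trivializes every technical hypothesis. There is essentially no hard part; the work consists entirely of checking that each precondition of Proposition~\ref{p:HXLPM3*main} is automatic for a monotone linear operator on $\RR^n$.

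First, I would observe that $A$, viewed as a single-valued linear relation with full domain $\dom A = \RR^n$ (so $A0 = \{0\}$), is maximal monotone by Proposition~\ref{p:linearMM}. Next, I would verify the two closedness conditions of Proposition~\ref{p:HXLPM3*main}: $\dom A = \RR^n$ is trivially closed, and $\ran A_+$, being a linear subspace of the finite-dimensional space $\RR^n$, is also closed (as all subspaces of $\RR^n$ are closed). With maximality and both closedness conditions in place, Proposition~\ref{p:HXLPM3*main} directly delivers the equivalence claimed here.

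I would note in passing that the forward implication ($3^*$-monotone $\Rightarrow$ paramonotone) can in fact be read off more cheaply from Corollary~\ref{c:linear3*isPM}, so the genuine content of this proposition for $\RR^n$ is the converse direction, which is the substantive part of Proposition~\ref{p:HXLPM3*main}. No real obstacle is expected in assembling the proof; it is purely a matter of invoking the stronger theorem with verified hypotheses.
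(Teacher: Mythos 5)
Your proposal is correct and matches the paper's own route: the paper derives this proposition by exactly the same specialization of Proposition~\ref{p:HXLPM3*main}, noting that a linear operator on $\RR^n$ has full domain (hence is maximal monotone by Proposition~\ref{p:linearMM}) and that all subspaces of $\RR^n$ are closed, so the hypotheses on $\dom A$ and $\ran A_+$ hold automatically. Your added remark that the forward implication also follows from Corollary~\ref{c:linear3*isPM} is consistent with the paper's surrounding discussion and requires no change.
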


In the following fact, we denote by $A_+ := \frac{1}{2} (A + A^*)$ the symmetric part of a linear operator $A: \RR^n \to \RR^n$, and by $\ker A := \{x \in \RR^n : Ax = 0 \}$ is the kernel of $A$.

\begin{fact}[\cite{Iusem98}]
  \label{f:LandP}
  Let $A: \RR^n \to \RR^n$ be a linear operator.
  Then $A$ is paramonotone if and only if $A$ is monotone and $\ker(A_+) \subseteq \ker(A)$.
\end{fact}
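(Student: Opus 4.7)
The plan is to reduce paramonotonicity to a condition on a single vector via linearity, then relate that condition to $A_+$ by exploiting the fact that a monotone linear operator has a positive semidefinite symmetric part.

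First I would exploit the linearity of $A$ to simplify the paramonotonicity condition. Since $A$ is single-valued, the paramonotonicity condition $\langle x-y,Ax-Ay\rangle = 0 \Rightarrow (x,Ay),(y,Ax)\in\gra A$ reduces (using single-valuedness at $x$ and $y$) to $Ax = Ay$. Setting $z := x-y$ and using $Az = Ax - Ay$ by linearity, paramonotonicity of the monotone operator $A$ is therefore equivalent to the statement
\begin{equation*}
(\forall z\in \RR^n)\quad \langle z, Az\rangle = 0 \ \Longrightarrow \ Az = 0,
\end{equation*}
i.e., $\{z:\langle z,Az\rangle = 0\}\subseteq \ker A$.

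Next I would pass from $A$ to $A_+$. Since $\langle z, Az\rangle = \langle z, A^*z\rangle$, we have $\langle z,Az\rangle = \langle z,A_+z\rangle$, so monotonicity of $A$ is equivalent to $A_+$ being positive semidefinite, and the set $\{z:\langle z,Az\rangle = 0\}$ equals $\{z:\langle z,A_+z\rangle = 0\}$. The key lemma I would invoke (or prove in one line via the Cauchy--Schwarz inequality applied to the PSD bilinear form $(u,v)\mapsto \langle u,A_+ v\rangle$) is that for a positive semidefinite symmetric operator $M$,
\begin{equation*}
\langle z, Mz\rangle = 0 \ \Longleftrightarrow \ Mz = 0,
\end{equation*}
so that $\{z:\langle z,A_+z\rangle = 0\} = \ker A_+$.

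Combining these two reductions, paramonotonicity of the monotone $A$ becomes precisely $\ker A_+ \subseteq \ker A$, which is the claim. The main (minor) obstacle is just making sure the single-valued reformulation of paramonotonicity is stated correctly and justifying the PSD Cauchy--Schwarz step; both are routine, so I do not anticipate any substantive difficulty.
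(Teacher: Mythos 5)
Your proof is correct. Note, however, that the paper does not prove this statement at all: it is recorded as a Fact with a citation to Iusem's 1998 paper, so there is no in-paper argument to compare against. Your argument supplies a complete and self-contained proof: the reduction of paramonotonicity for a single-valued linear $A$ to the implication $\langle z,Az\rangle=0\Rightarrow Az=0$ is exactly right (paramonotonicity at the pair $(x,y)$ becomes, via linearity, a statement about $z=x-y$, and single-valuedness turns the graph conditions $Ax\in Ay$, $Ay\in Ax$ into $Ax=Ay$); the identity $\langle z,Az\rangle=\langle z,A_+z\rangle$ together with the Cauchy--Schwarz inequality for the positive semidefinite form $(u,v)\mapsto\langle u,A_+v\rangle$ correctly identifies $\{z:\langle z,Az\rangle=0\}$ with $\ker A_+$ under monotonicity, and both implications of the equivalence then follow. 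This is essentially the standard argument (and, incidentally, the same mechanism the paper exploits implicitly in Proposition~\ref{p:SMonR2}), so there is no gap; your proof simply makes explicit what the paper delegates to the reference.
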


In Remark~\ref{r:linearPMnotSMimpliesCM}
we noted that the converse of Proposition~\ref{p:3-2cyclic}
holds for
monotone linear operators that are not strictly monotone operators on $\RR^2$.
We now demonstrate that 
this result does not generalize to $\RR^3$.

\begin{example} \label{ex:linearcexample}
    Let $T : \RR^3 \to \RR^3$ be the linear operator defined by
    \begin{equation}
        \label{e:linearcexample}
        T x := \left[ \begin{array}{ccc}
            1 & -2 & 1 \\
            3 & 1 & 3 \\
            1 & -2 & 1
        \end{array} \right] x
    \end{equation}
    $T$ is paramonotone and maximal monotone, but not strictly monotone. 
    $T$ is not $3$-cyclic monotone, but is $3^*$-monotone.
\end{example}
\begin{proof}
    The symmetric part of $T$ is
    \begin{equation*}
        T_{+} := \left[ \begin{array}{ccc}
            1 & 1/2 & 1 \\
            1/2 & 1 & 1/2 \\
            1 & 1/2 & 1 
        \end{array} \right]
    \end{equation*}
    Since the eigenvalues of $T_+$, consisting of 
    $\{0, \frac{1}{2}(3 + \sqrt{3}), \frac{1}{2}(3 - \sqrt{3})\}$,
    are nonnegative,
    $T_+$ is positive semidefinite, hence monotone, and so
    $T$ is monotone.\\
    An elementary calculation yields that
    $\ker T_+ = \{ t (-1, 0, 1) : t \in \RR \}$.
    Clearly, $\ker T = \ker T_+$, so
    by Fact~\ref{f:LandP},
    $T$ is paramonotone.
    However, $T$ is
    not strictly monotone since the kernel contains more than the zero element.
    \\
    $T$ is maximal monotone since it is linear and has full domain (Proposition~\ref{p:linearMM}).
    Finally, $T$ is not $3$-cyclic monotone since the points
    $(0,0,0), (1,0,0),$ and $(0,1,0)$ do not satisfy the defining condition
    (\ref{e:3cyclicgeq0}).  
    (For a shortcut, call to mind Example~\ref{ex:linear2Da} and Proposition~\ref{p:3cmlinearR2}.)
		Finally, since $T$ is a linear operator in $\RR^3$ that is paramonotone, it is 
		$3^*$-monotone by Proposition~\ref{p:linearPMis3*isPM}.
\end{proof}

\subsection{Monotone linear operators in infinite dimensions}

Recall from Proposition~\ref{p:linearPMis3*isPM} that linear paramonotone operators
on $\RR^n$ are $3^*$ monotone.  
Example~\ref{ex:linearPMnot3*} below
demonstrates that larger spaces are more permissive.
A similar example appears in \cite{HXLtoappear}.

\begin{example}
	\label{ex:linearPMnot3*}
	Let $\theta_k := \pi/2 - 1/k^4$ and
	let $A: \ell^2 \to \ell^2$ be the linear operator defined by
	\begin{equation}
		A \mathbf{x} \mapsto \sum_{k=1}^{+\infty} 
		\left( \cos(\theta_k) x_{2k-1} - \sin(\theta_k)x_{2k} \right) \mathbf{e}_{2k-1}
		+ \left( \sin(\theta_k) x_{2k-1} + \cos(\theta_k) x_{2k} \right) \mathbf{e}_{2k},
		\label{e:linearPMnot3*}
	\end{equation}
  The structure of $A$ is such that every $\mathbf{x}^* = A\bx$ obeys
  \begin{equation}
    \label{e:linearPMnot3*asR}
    \left[ \begin{array}{c} x^*_{2k-1} \\ x^*_{2k} \end{array} \right]
      = R_{\theta_k} 
    \left[ \begin{array}{c} x_{2k-1} \\ x_{2k} \end{array} \right]
  \end{equation}
  for all $\mathbf{x} \in \ell^2$ and $k \in \NN$,
  where $R_{\theta_k}$ is the rotation matrix as introduced in Example~\ref{ex:linear2D}.
  $A$ is strictly monotone and maximal monotone, but not $3^*$-monotone.
	It follows that $A$ is also paramonotone but not $3$-cyclic monotone.
\end{example}
\begin{proof}
  The monotonicity of $T$ is evident from (\ref{e:linearPMnot3*asR}).
	Suppose that $\mathbf{x} \in \ell^2$ is such that
	$\langle \mathbf{x}, A\mathbf{x} \rangle = 0$.
	Now, 
	\[
	\langle \mathbf{x}, A\mathbf{x} \rangle
	= \sum_{k=1}^{+\infty} \cos(\theta_k) (x_{2k-1}^2 + x_{2k}^2)
	\]
	is equal to zero if and only if $\mathbf{x} = \mathbf{0}$,
	and so $A$ is strictly monotone.\\
	By Proposition~\ref{p:linearMM}, $A$ is maximal monotone since it is linear and has full domain.\\
	Let $\mathbf{x} =\mathbf{0}$, so that $A\mathbf{x} = \mathbf{0}$,
	and let $\mathbf{z} = \sum_{k=1}^{+\infty} \frac{1}{k} (\mathbf{e}_{2k-1} + \mathbf{e}_{2k})$.
	Define a sequence $\mathbf{y}_n \in \ell^2$ by
	$\mathbf{y}_n := n^2 \mathbf{e}_{2n-1}$,
	and so 
	$A \mathbf{y}_n = n^2 \cos(\theta_n) \mathbf{e}_{2n-1} + n^2 \sin(\theta_n) \mathbf{e}_{2n}$.
	For all $n$, $0 < \cos(\theta_n) \leq 1/n^4$,
	and from the Taylor's series $\sin(\theta_n) \geq 1 - 1/(2n^8)$
	for all large $n$.
	Considering the inequality related to $3^*$-monotonicity, we have
	\begin{equation}
		\begin{array}{lll}
      \langle \mathbf{z} - \mathbf{y}_n, A\mathbf{y}_n - A\mathbf{x}  \rangle
			&=& n \left( \cos(\theta_n) + \sin(\theta_n) \right) - n^4 \cos(\theta_n) \\
			&\geq& n(0 + 1 - 1/(2n^8)) - 1 \\
			&\to& +\infty, \qquad \textrm{as~} n \to +\infty,
		\end{array}
	\end{equation}
	and so $A$ fails to be $3^*$-monotone.\\
\end{proof}

\begin{remark}
	\label{r:linearPMnot3*notSM}
	The operator $A$ from Example~\ref{ex:linearPMnot3*} 
  can be modified to lose 
	its strict monotonicity property by 
	using the zero function $\mathbf{0}:\RR \to \RR$
	as a prefactor in the product space, yielding $T = \mathbf{0} \times A$.
  In this manner,
	\begin{equation}
		T \mathbf{x} := \sum_{k=1}^{+\infty} \left[
    \begin{array}{l}
		\left( \cos(\theta_k) x_{2k} - \sin(\theta_k)x_{2k+1} \right) \mathbf{e}_{2k} \\
		\qquad + \left( \sin(\theta_k) x_{2k} + \cos(\theta_k) x_{2k+1} \right) \mathbf{e}_{2k+1}.
  \end{array} \right]
	\end{equation}
\end{remark}
\begin{proof}
  The Hilbert space $\ell^2$ can be written as a product space $\ell^2 = \RR \times \ell^2$.
  More precisely, all of these spaces can be embedded in the larger space $\ell^2(\mathcal{Z})$
  with standard unit vectors $e_i$, where $i \in \mathcal{Z}$.
  In this setting $\ell^2 = \textrm{span} \{e_i : i \in \NN\}$,
  and let $V_0 = \textrm{span} \{e_0\}$
  so that $\ell^2(\NN \bigcup \{0\}) = V_0 \times \ell^2$.
  Let $T = \mathbf{0} \times A$, where $A$ is the linear operator from Example~\ref{ex:linearPMnot3*}.
  The operator $\mathbf{0}: V_0 \to V_0$ is paramonotone, maximal monotone,
  $3$-cyclic monotone, and $3^*$-monotone, but not strictly monotone on $\RR$.
  The operator $A : \ell^2 \to \ell^2$ from Example~\ref{ex:linearPMnot3*}
  is strictly monotone and maximal monotone, but not $3^*$-monotone.
  Therefore, by the results of Section~\ref{s:productSpace},
  $T:= \mathbf{0} \times A$ is
  paramonotone and maximal monotone, and fails to be
  strictly monotone or $3^*$-monotone.
\end{proof}

Note that all linear operators are assumed to have full domain and are therefore maximal monotone by Proposition~\ref{p:linearMM}.
Also, if a linear operator fails to be paramonotone, it fails to be $3^*$-monotone
and $3$-cyclic monotone as well.  The results for linear operators in a Hilbert space can now be summarized as in Table~\ref{table:linear} below.

\begin{table}[!h]
    \caption{Monotone linear operators:  monotone class relationships}
    \centering
    \begin{tabular}{c c c c | c l}
    \label{table:linear}
        PM & SM & 3CM & 3* \\
        \hline
        0&0&0&0& $\exists$ & $R_{\pi/2}$ \\
				0&*&*&1& $\emptyset$ & Proposition~\ref{p:linear3*isPM} \\
        *&*&1&0& $\emptyset$ & Fact~\ref{f:3CMis3*} \\
        0&*&1&*& $\emptyset$ & Proposition~\ref{p:3-2cyclic} \\
        0&1&*&*& $\emptyset$ & Fact~\ref{f:SMisPM} \\
				1&0&0&0& $\exists$ & Remark~\ref{r:linearPMnot3*notSM} \\
				1&0&0&1& $\exists$ & Example~\ref{ex:linearcexample} \\
        1&0&1&1& $\exists$ & $\mathbf{0}$ \\ 
				1&1&0&0& $\exists$ & Example~\ref{ex:linearPMnot3*} \\
				1&1&0&1& $\exists$ & Example~\ref{ex:linear2D} ($R_{\theta}$, $\pi/2 > |\theta| > \pi/3$)\\
        1&1&1&1& $\exists$ & $Id$ \\ 
        \hline
    \end{tabular}
    \\
\begin{tabular}{l}
Where: \\
'PM' represents paramonotone, \\
'SM' represents strictly monotone, \\
'3CM' represents $3$-cyclic monotone, and \\
1 represents that the property is present \\
0 represents an absence of that property \\
* represents that both 0/1 are covered by the example/result. \\
$\exists$ represents that an example with these properties exists. \\
$\emptyset$ represents that this combination of properties is impossible. 
\end{tabular}
\end{table}

\section{Summary}
\label{s:Summary}

By bringing together existing works and new results, the relationship
between the five classes of monotone operator considered, that is maximal,
para, $3^*$-, $3-$cyclic, and strictly monotone operators, is now fully
understood in $\RR^2$, $\RR^n$ and in general Hilbert spaces.
The results of Section~\ref{s:LR}, particularly Proposition~\ref{p:LRdimreduction},
allows these results to be extended to linear relations.
Furthermore, results of Section~\ref{s:productSpace} can be used to generate
further linear operators belong to these classes, and can be used to determine
the monotone classes to which an operator belongs given a block-diagonal form by examining its composite blocks.
The following Venn diagrams summarize the relationships between these five classes of monotone operator.

\begin{figure}[!h]
  \begin{center}
    \epsfig{file=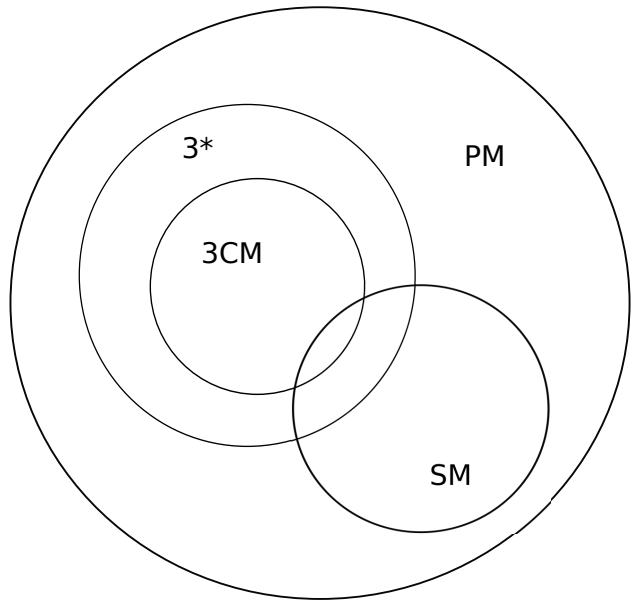, width=0.5\textwidth}
  \end{center}
  \caption{Monotone linear operators: monotone class relationships.
  PM = paramonotone, SM = strictly monotone, 3CM = $3$
  cyclic monotone, 3* = $3^*$-monotone.}
  \label{fig:4compareL}
\end{figure}

\begin{figure}[!h]
  \begin{center}
    \epsfig{file=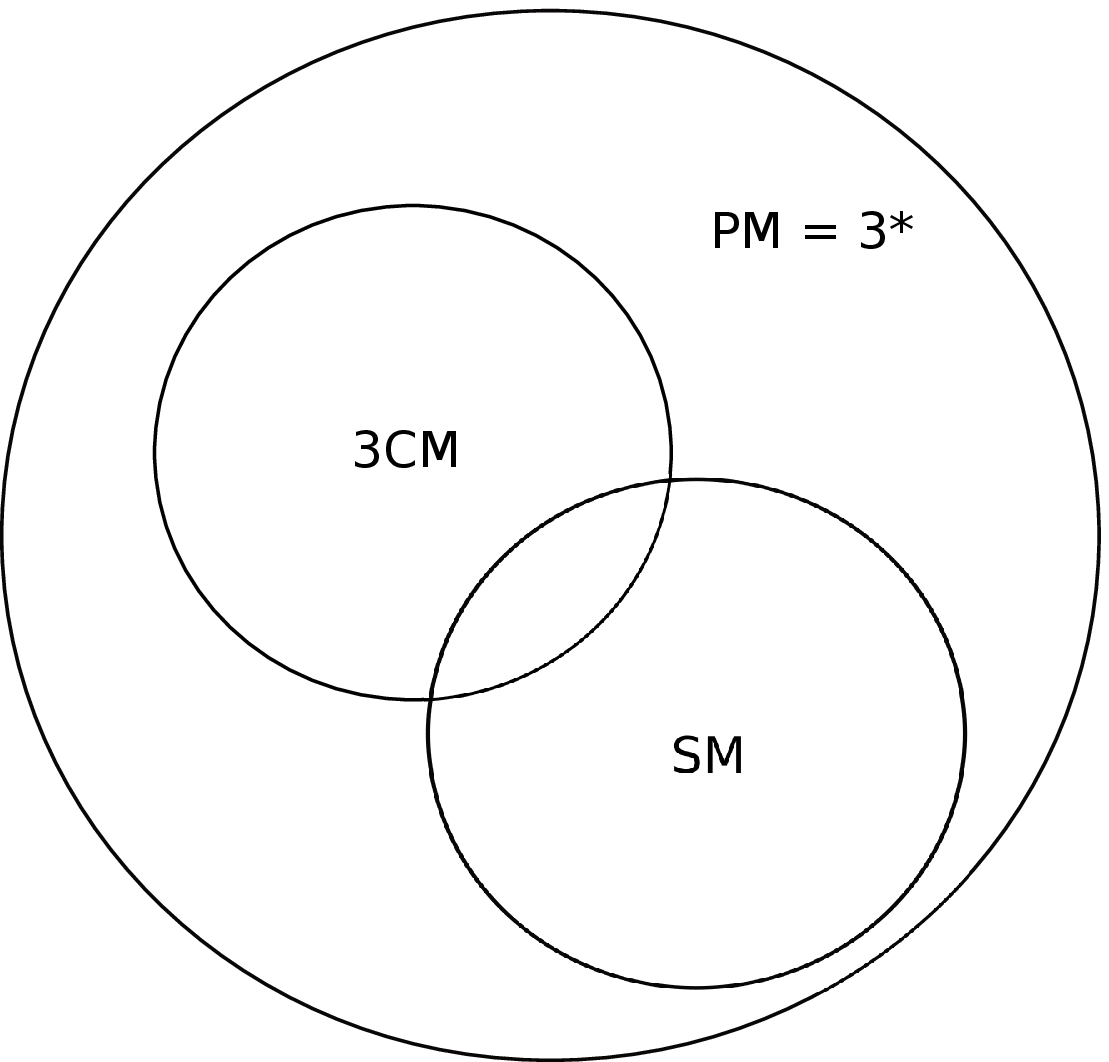, width=0.5\textwidth}
  \end{center}
  \caption{Monotone linear operators on $\RR^n$: monotone class relationships.
  PM = paramonotone, SM = strictly monotone, 3CM = $3$
  cyclic monotone, 3* = $3^*$-monotone.}
  \label{fig:4compareLRn}
\end{figure}

\begin{figure}[!h]
  \begin{center}
    \epsfig{file=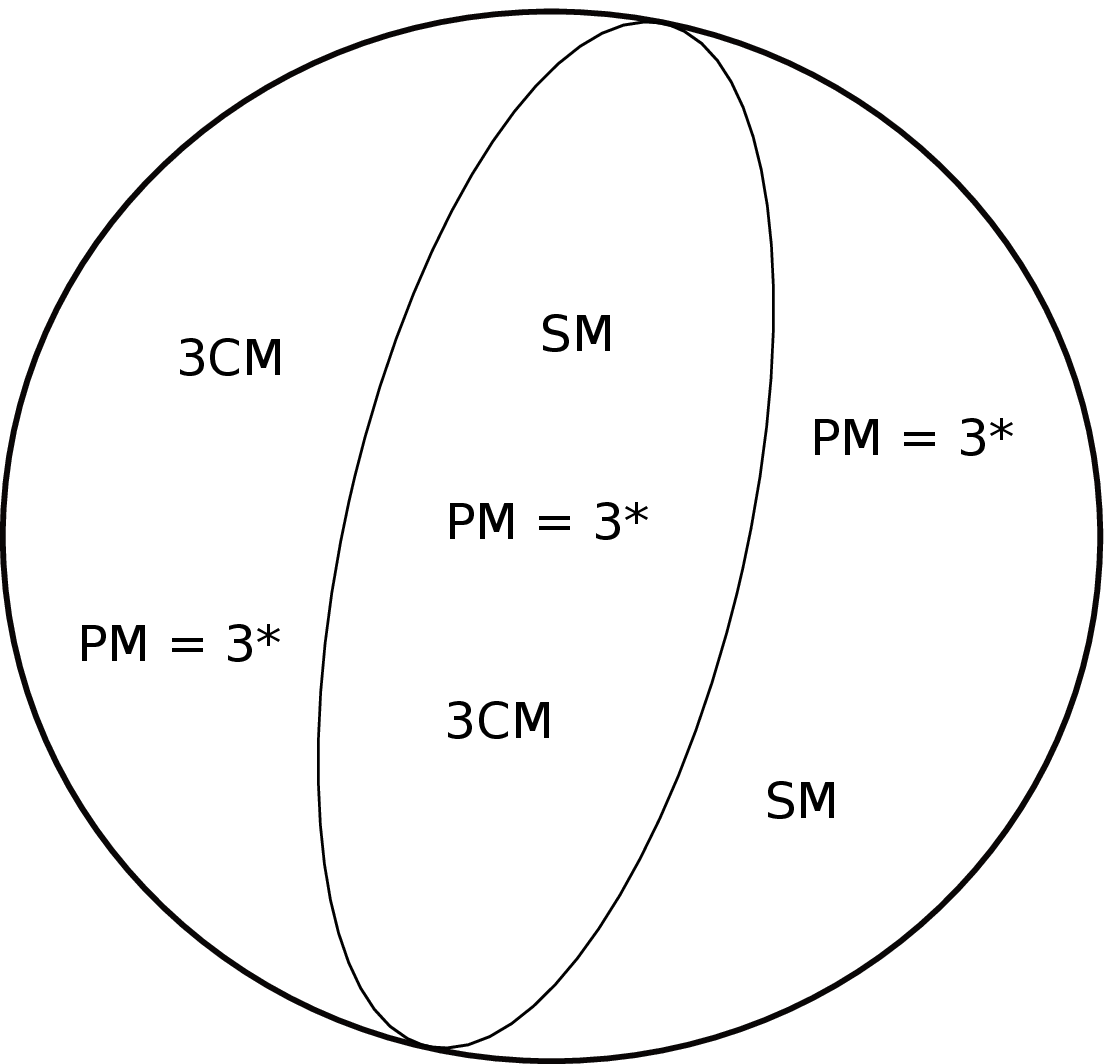, width=0.5\textwidth}
  \end{center}
  \caption{Monotone linear operators on $\RR^2$: monotone class relationships.
  PM = paramonotone, SM = strictly monotone, 3CM = $3$
  cyclic monotone, 3* = $3^*$-monotone.}
  \label{fig:4compareLR2}
\end{figure}

Further results for nonlinear operators involving these five
classes will appear shortly.  Some of those results and
the results of this paper were presented at a meeting of the Canadian Mathematical
Society in Vancouver, Canada on December 4, 2010.

\clearpage 

\bibliographystyle{plain}
\bibliography{bibtex}{}

\end{document}